\documentclass[12pt]{article}
\usepackage[utf8]{inputenc}
\usepackage{tikz}
\usepackage{amsthm}
\usepackage{amsmath}
\usepackage{amssymb}
\usepackage{amsthm}
\usepackage{amscd}
\usepackage[all]{xy}
\usepackage[margin=1in]{geometry}
\usepackage{comment}
\usepackage{pdfpages}
\usepackage{tikz-cd}
\usepackage[T1]{fontenc}
\usepackage{amsmath,amsfonts,amssymb,enumerate}
\usepackage{amscd}
\usepackage{graphicx}
\usepackage[T1]{fontenc}
\usepackage{amsmath,amsfonts,amssymb,enumerate}
\usepackage{amsthm} 
\usepackage[english]{babel}

\newtheorem{theorem}{Theorem}[section]
\newtheorem{lemma}[theorem]{Lemma}
\newtheorem{proposition}[theorem]{Proposition}
\newtheorem{corollary}[theorem]{Corollary}

\newtheorem{construction}[theorem]{Construction}
\newtheorem{prop}[theorem]{Proposition}

\theoremstyle{definition}

\newtheorem{definition}[theorem]{Definition}
\newtheorem{remark}[theorem]{Remark}
\newtheorem{example}[theorem]{Example}

\theoremstyle{definition}
\numberwithin{equation}{section}

\newcommand{\N}{\mathbb{N}}

\newcommand{\ord}{\mathrm {ord}}

\newcommand{\m}{\mathfrak{m}}

\newcommand{\hgt}{\mathrm {ht}}

\def\m{{\mathfrak m}} 

\def\p{{\mathfrak p}}    
\def\q{{\mathfrak q}}    


\def\Spec{\mbox{\rm Spec }}

\def\hgt{\mbox{\rm ht }}

\def\dim{\mbox{\rm dim}}

\def\ord{\mbox{\rm ord}}
\def\trdeg{\mbox{\rm tr.deg}}


\begin{document}

\title{Directed unions of local monoidal transforms and GCD domains}

\author{ Lorenzo Guerrieri
\thanks{Universit\`a di Catania, Dipartimento di Matematica e
Informatica, Viale A. Doria, 6, 95125 Catania, Italy \textbf{Email address:} guelor@guelan.com }} 

\maketitle






\begin{abstract}
\noindent Let $(R, \mathfrak{m})$ be a regular local ring of dimension $d \geq 2$. 
A local monoidal transform of $R$ is a ring of the form $R_1= R[\frac{\mathfrak{p}}{x}]_{\mathfrak{m}_1}$ where $x \in \p$ is a regular parameter, $\p$ is a regular prime ideal of $R$ and $ \mathfrak{m}_1 $ is a maximal ideal of $ R[\frac{\mathfrak{p}}{x}] $ lying over $ \mathfrak{m}. $ \\
In this article we study some features of the rings $ S= \cup_{n \geq 0}^{\infty} R_n $ obtained as infinite directed union of iterated local monoidal transforms of $R$. \\
In order to study when these rings are GCD domains, we also provide results in the more general setting of directed unions of GCD domains.

\medskip

\noindent MSC: 13A15, 13F15, 13G05, 13H05.           \\ 
\noindent Keywords: Regular local rings, Monoidal transforms, directed unions of domains, GCD domains.
\end{abstract}

\section{Introduction}  
 Let $(R,\m)$ be a regular local ring.  An element $x \in \m$ is a {\it regular parameter} if $x \not\in \m^2$.   
  A prime ideal $\p$ of $R$ is a regular prime if $\frac{R}{\p}$ is again a regular local ring (i.e. $\p$ is generated by regular parameters).
 Let $\p$ be a  regular prime ideal of $R$
  with $\hgt \p \ge 2$. 
  A \textit{local monoidal transform} of $R$ is an overring of the form $$R_1=R \left[ \frac{\p}{x} \right]_{\m_1}$$ where $x \in \p \setminus \p^2$ and $\m_1$ is a maximal ideal of $R[\frac{\p}{x}]$ lying over $\m$. 
  
  In the particular case in which $\p$ is equal to the maximal ideal $\m$, the ring $R_1$ is called a \textit{local quadratic transform} of $R$. These concepts were mainly studied starting from the 50's by authors like Abhyankar \cite{Abh} \cite{Abh3}, Hironaka \cite{hironaka} and Zariski \cite{ZarR} for their geometric interpretation, useful to solve problems related to resolution of singularities of algebraic varieties.
  Sequences of iterated local quadratic transform of rings of the same dimension are of great interest, since this process corresponds
to the geometric notion of following a non singular closed point of an algebraic variety through repeated
blow-ups. In his paper \cite{Sha}, David Shannon discussed several properties of sequences of local monoidal transforms focusing on the parallelism between the algebraic point of view of regular local rings and its global geometric interpretation of projective models.
  
  Given a valuation overring $V$ of $R$, there exists a unique sequence of local quadratic transform of $R$ such that $$ R \subseteq R_1 \subseteq R_2 \subseteq ... \subseteq V$$ and $R_n$ is a local quadratic transform of $R_{n-1}$ for every $n$. This sequence is called \textit{sequence of local quadratic transform along $V$}.
  In this case the first local quadratic transform $ R_1= R \left[ \frac{\p}{x} \right]_{\m_1}$ is such that the value of $x$ with respect to the valuation induced by $V$ is minimal in $\m$ and $ \m_1 $ is the center of $V$ over the ring $ R \left[ \frac{\p}{x} \right] $. The others transforms $R_n$ are built in the same way.
  
  
Assuming $\dim R_n \ge 2$, we have  $R_n \subsetneq  R_{n+1}$ for each positive integer $n$ 
        and $\bigcup _n R_n$ is an infinite ascending union.
        If $\dim R = 2$, the union $\bigcup _n R_n$ is equal to a valuation overring of $R$ \cite[Lemma 12]{Abh}.  
If $\dim R > 2$, then,  examples due to Shannon \cite{Sha} show
that  the union may be not a valuation ring. 
    This fact leaded to the study of the structure of the rings of the form $S= \bigcup _n R_n$, which are called \textit{quadratic Shannon extension of $R$} \cite{HLOST}.
  
Recently, authors like Heinzer, Loper, Olberding, Schoutens, Toeniskoetter (\cite{HLOST}, \cite{HOT}, \cite{GHOT}) and others studied the ideal-theoretic structure of quadratic Shannon extensions without focusing particularly on the geometric origin of these concepts. The tools used in these works are usually those from multiplicative ideal theory.  In \cite{HLOST} and \cite{HOT}, the authors call a quadratic Shannon extension $S$ simply a Shannon extension of $R$. 
     
In this paper we distinguish this class of rings, that we call quadratic Shannon extensions, from the larger class of rings of \textit{monoidal Shannon extensions} formed by the rings of the form $S= \bigcup _n R_n$ where $R_n$ are iterated monoidal transform of $R$. The aim of this paper is to study the ring-theoretic structure of some classes of monoidal Shannon extensions comparing their properties with those of the quadratic extensions.

For instance, while there are many valuation overrings that birationally dominates $R$ which are not quadratic Shannon extension (for instance the unique sequence of local quadratic transform of $R$ along a valuation overring of rank at least $3$ yields to a Shannon extension which is not a valuation ring \cite[Theorem 8.1]{HLOST}), under certain hypothesis, easily fulfilled by regular local rings arising in a geometric context, every valuation overring birationally dominating $R$ is a monoidal Shannon extension.
This is one of the most important result of Shannon's paper \cite{Sha} and we state it for completeness. 

\begin{theorem} \label{shteo}  Let $(R, \m)$ be an excellent regular local ring of dimension greater than one such that one of the following conditions hold:
\begin{enumerate}
\item The residue field $\frac{R}{\m}$ has characteristic zero (in any dimension of $R$).
\item $R$ is equicharacteristic of dimension at most $3$.
\end{enumerate}
 Then every valuation ring $V$ that birationally dominates $R$ is a union of local monoidal transform of $R$.
   \end{theorem}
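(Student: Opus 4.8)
The plan is to exhibit $V$ as the union of those iterated local monoidal transforms of $R$ that are dominated by $V$, using local uniformization to show that this union exhausts $V$. Let $K$ be the common fraction field of $R$ and $V$, and let $\mathcal{F}$ be the family of regular local rings $T$ with $R \subseteq T \subseteq V$ that are birational over $R$, dominated by $V$, and obtained from $R$ by a finite sequence of local monoidal transforms each taken along $V$ (at every stage one localizes the monoidal transform at the center of $V$). Every member of $\mathcal{F}$ sits inside $V$, so $S := \bigcup_{T \in \mathcal{F}} T \subseteq V$, and the aim is to prove equality.

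The reduction is then immediate, and it can be organized so as to produce an ascending family rather than an unstructured union: it suffices to show that from any $T \in \mathcal{F}$ and any $t \in V$ one can pass to a further $T' \in \mathcal{F}$ with $T \subseteq T'$ and $t \in T'$, since one may apply the uniformization step to the current model $T$ — itself regular local and dominated by $V$ — rather than to $R$. Iterating from $R = R_0$ then yields a directed family $R_0 \subseteq R_1 \subseteq \cdots$ of local monoidal transforms along $V$ whose union is $V$, exactly the shape of a monoidal Shannon extension, the union being all of $V$ precisely because every $t \in V$ is captured at some stage.

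This is where the hypotheses enter, through local uniformization. In case (1) the residue field $\frac{R}{\m}$ has characteristic zero, forcing $R$ to be equicharacteristic zero and placing us within Zariski's local uniformization; in case (2) we are equicharacteristic of dimension at most three, where Abhyankar's resolution supplies local uniformization in every characteristic, with excellence of $R$ guaranteeing that the process stays among regular birational models. Given $t \in V$, I would invoke local uniformization to obtain a regular local ring $T'$ with $R \subseteq T' \subseteq V$, birational and dominated by $V$, containing $t$, and reached from the current model by finitely many blow-ups along regular centers.

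The hard part will be the final normalization: the resolution theorems yield a birational regular model on which the center of $V$ is smooth, but to match the definition used here one must check that this model is reached by blow-ups whose centers are \emph{regular} primes and that localizing each blow-up at the center of $V$ produces exactly a ring $T_i[\frac{\p_i}{x_i}]_{\m_{i+1}}$ with $x_i \in \p_i \setminus \p_i^2$. Verifying that every intermediate center is regular and that the whole chain can be followed along $V$, so that $T' \in \mathcal{F}$, is the technical core; granting this, the reduction above gives $\bigcup_{T \in \mathcal{F}} T = V$.
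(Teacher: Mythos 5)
First, a point of orientation: the paper does not prove this theorem at all --- it is stated ``for completeness'' and attributed to Shannon \cite{Sha} --- so your sketch can only be measured against Shannon's own argument. Your formal skeleton does match the shape of that argument: reduce everything to a capturing lemma (from any iterated monoidal transform $T$ along $V$ and any $t \in V$, produce a further transform $T'$ along $V$ with $T \subseteq T'$ and $t \in T'$), observe that the family of such rings is then directed, and conclude that its union is $V$. That reduction, and the observation that domination by $V$ makes the localizations compatible, are correct and essentially formal.

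The genuine gap sits exactly where you write ``granting this.'' The capturing step is not a final normalization to be checked after quoting a resolution theorem; it \emph{is} the theorem, and bare local uniformization is not the right tool for it. Local uniformization (Zariski in characteristic zero, Abhyankar in low dimension) produces \emph{some} regular local ring between $R$ and $V$ dominated by $V$, possibly containing prescribed elements, but it gives no control on how that ring sits over $R$: nothing says it is reached from $R$ by blow-ups, let alone by blow-ups with regular centers localized at the center of $V$. Asking to connect two regular birational models by monoidal transforms is a factorization-type problem that one cannot expect to solve in the form you need; your $T'$ cannot simply be ``normalized'' into the family $\mathcal{F}$. What Shannon actually uses is \emph{principalization of ideals} (Hironaka's Main Theorem II in characteristic zero \cite{hironaka}; the corresponding results of Abhyankar in equicharacteristic dimension at most $3$, which is where excellence enters): write $t = a/b$ with $a, b \in R$, principalize the ideal $(a,b)R$ by a finite sequence of blow-ups with regular centers, and localize each stage at the center of $V$. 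Each localization is either trivial (the center misses the point of $V$) or a local monoidal transform, so one obtains $R = R_0 \subseteq \cdots \subseteq R_n$ along $V$ with $(a,b)R_n = cR_n$ principal. Writing $a = ca'$, $b = cb'$ with $(a', b')R_n = R_n$, either $b'$ is a unit of $R_n$ and $t = a'/b' \in R_n$, or $a'$ is a unit, in which case $1/t \in R_n$; since $t \in V$ we have $1/t \notin \m_V$, hence $1/t \notin \m_V \cap R_n$, which is the maximal ideal of $R_n$ because $V$ dominates $R_n$, so $1/t$ is a unit of $R_n$ and again $t \in R_n$. This principalization-plus-domination mechanism is what your sketch lacks; without it the inclusion $\bigcup_{T \in \mathcal{F}} T \subseteq V$ could a priori be strict. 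A secondary slip: iterating one element at a time along a chain indexed by $\N$ exhausts only countably many prescribed elements of $V$; the correct conclusion is that the \emph{directed} union over all of $\mathcal{F}$ equals $V$, not that a single countable chain necessarily does.
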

  
 
 
 
 Hence, in a ring theoretic language, assuming mild geometric hypothesis on $R$, we can say that every valuation overring that birationally dominates $R$ is a monoidal Shannon extension of $R$. The converse is clearly not true, as Shannon itself mentioned providing two examples. 

 An interesting and wide open problem is now to study with ideal-theoretic tools the monoidal Shannon extension which are neither quadratic extensions nor valuation rings. 
 In this work we provide examples of classes of monoidal non-quadratic Shannon extensions and we study their properties. 
  In the study of monoidal Shannon extensions we are interested in establish under which assumptions they are GCD domains. We recall that an integral domain $D$ is a GCD domain if  for all $a, b \in D$, $aD \cap bD$ is a principal ideal of $D$. The survey paper of Dan Anderson \cite{Anderson} and the book of Robert Gilmer \cite{Gil} are good references for general theory of GCD domains. 
 
 In \cite[Theorem 6.2]{GHOT} is proved that a quadratic Shannon extension of a regular local ring is a GCD domain if and only if it is a valuation ring. In the monoidal case we find Shannon extensions which are GCD domains but not valuation domains.
 
 A useful approach for this question is to study in general directed unions of GCD domains and see under which conditions, a finite intersection of principal ideals of the union is principal. We dedicate Section 2 to this question, introducing two classes of domains, that may seem interesting also in a general context: the first one is formed by the domains in which finite intersections of principal ideals are finitely generated only if are principal. Pre-Schreier domains and hence directed unions of GCD domains fulfill this property \cite{zafrullah}. The second class is formed by the domains having a finite intersection of principal ideals finitely generated only when equal to one of the principal ideals intersected (see Definitions \ref{d1} and \ref{d2}). We characterize when a directed union of Noetherian GCD domains is in this second class, deriving as a consequence a generalization of \cite[Theorem 6.2]{GHOT}.
 
 In Section 3, we specialize to the study of monoidal Shannon extensions. We characterize when they are Noetherian and we give a detailed description of a specific example of a monoidal non-quadratic Shannon extension. This ring, defined in Construction \ref{construction} it is not a valuation domain and it will be shown at the end of this paper to be a GCD domain.
 
 In Section 4, we introduce the concept of \it chain-prime ideal \rm of a monoidal Shannon extension $S = \bigcup_{n \in \N}R_n$. They are "special" prime ideals of $S$ obtained as infinite ascending union of collections of prime ideals $\p_n \subseteq R_n$, that are the prime ideals defining the different iterated monoidal transforms. We also provide bounds on the height of these special prime ideals in term of the height of the ideals $\p_n$ in $R_n$ and we connect this bounds to possible bounds on the dimension of valuation overrings obtained as Shannon extensions.
 
 In Section 5, we use the theory developed in Section 4 to study the monoidal Shannon extensions having principal maximal ideal and we describe their features using their representations as pullback diagrams.

 Finally, in Section \ref{ultima} we characterize the GCD domains among the large class of the monoidal Shannon extensions having only finitely many chain-prime ideals. We prove that, in this case, a monoidal Shannon extension is a GCD domain if and only if the localizations at all the chain-prime ideals are valuation rings.

 Our standard notation is as in Matsumura \cite{Mat}.  Thus a local ring need not be Noetherian.
We call an extension ring $B$ of an integral domain $A$ an {\it overring of} $A$ if $B$ is a subring of the quotient field of $A$.  If, in addition,  $A$ and $B$ are local and the inclusion map $A \hookrightarrow B$ is a 
local homomorphism (i.e. the maximal ideal of $A$ is contained in the maximal ideal of $B$),  we say that $B$ {\it birationally dominates}  $A$. 
If $P$ is a prime ideal of a ring $A$, we denote by $\kappa(P)$ the residue field of $A_P$.  

 \medskip
 

 \section{Directed unions of GCD domains}

An integral domain $D$ is a \it GCD domain \rm if for every $a, b \in D$, $aD \cap bD$ is a principal ideal of $D$ and it is a \it finite conductor \rm domain if for every $a, b \in D$, $aD \cap bD$ is finitely generated.
We give here a new definition whose name is inspired from the fact that in a Bezout domain, any finitely generated ideal is principal.

\begin{definition} 
\label{d1}
 An integral domain $D$ is said a Bezout Intersection domain (BID) if for any finite collection of elements $a_1, a_2, \ldots, a_s \in D$, the ideal $$I= a_1D \cap a_2D \cap \ldots \cap a_sD$$ is either principal or it is non finitely generated.
 \end{definition}

It is straightforward to observe that a BID is Noetherian or it is a finite conductor if and only if it is a GCD domain. Moreover the invertible ideals of a BID are principal and hence a BID it is Pr\"{u}fer domain if and only if it is Bezout domain.

\begin{theorem} 
\label{uniongcd}
Let $ \lbrace D_n \rbrace_{n \geq 1} $ be an infinite family of GCD domains such that $D_n \subseteq D_{n+1}$ for every $n$.
Let $D = \bigcup_{n=1}^{\infty} D_n$ be the directed union of the GCD domains $D_n$. Then $D$ is a BID.
\end{theorem}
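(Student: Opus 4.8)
The plan is to verify the defining property of a BID directly: given $a_1,\dots,a_s \in D$ whose intersection $I = a_1 D \cap \dots \cap a_s D$ is finitely generated, I would show that $I$ is principal. Since the union is directed, there is an index $n$ with $a_1,\dots,a_s \in D_n$, and I may restrict attention to the cofinal family $\{D_M\}_{M \geq n}$. The first step is to record the structural identity
$$I \;=\; \bigcup_{M \geq n} \bigl( a_1 D_M \cap \dots \cap a_s D_M \bigr).$$
The inclusion $\supseteq$ is immediate since each $D_M \subseteq D$. For $\subseteq$, an element $x \in I$ lies in some $D_M$ and satisfies $x = a_i y_i$ with $y_i \in D$; enlarging $M$ so that it also contains all the finitely many multipliers $y_1,\dots,y_s$ puts $x$ in $a_1 D_M \cap \dots \cap a_s D_M$. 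This reduction to finitely many witnesses is exactly where directedness is used.

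Next I would invoke the hypothesis that each $D_M$ is a GCD domain. In a GCD domain a finite intersection of principal ideals is again principal (the generator being a least common multiple of $a_1,\dots,a_s$), so I may write $a_1 D_M \cap \dots \cap a_s D_M = \ell_M D_M$ for some $\ell_M \in D_M$. Combined with the identity above, this exhibits $I$ as an ascending union $I = \bigcup_{M \geq n} \ell_M D_M$ of principal ideals of the $D_M$; the chain is increasing because the intersections are nested, which forces $\ell_{M+1} \mid \ell_M$ in $D_{M+1}$.

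The conclusion is then the general observation that an ascending union of principal ideals is principal as soon as it is finitely generated. Concretely, if $I = (g_1,\dots,g_k)D$, then each generator $g_j$ already lies in $\ell_{M_j} D_{M_j}$ for some $M_j$; taking $N = \max_j M_j$, all the $g_j$ lie in $\ell_N D_N \subseteq \ell_N D$, whence $I \subseteq \ell_N D$. Since conversely $\ell_N \in \ell_N D_N \subseteq I$, we obtain $I = \ell_N D$, a principal ideal, as required.

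I expect the only genuinely delicate point to be the verification of the displayed identity for $I$, and in particular the passage from ``$x$ is a common multiple in $D$'' to ``$x$ is a common multiple already in some $D_M$'': one must pass to a single index large enough to contain all the multipliers simultaneously, which is precisely the content of directedness. Everything after that is formal, relying on the GCD property of the $D_M$ to make the intermediate intersections principal and on the fact that an increasing union of principal ideals can be finitely generated only by stabilizing at one of them.
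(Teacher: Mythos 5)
Your proof is correct and takes essentially the same approach as the paper: reduce to the identity $I=\bigcup_{M}\bigl(a_1D_M\cap\dots\cap a_sD_M\bigr)$, use the GCD property to write each intersection as $\ell_M D_M$, and conclude from the resulting ascending union of principal ideals. Your final stabilization argument (finitely generated forces $I=\ell_N D$) merely spells out the step the paper asserts without detail, so the two proofs coincide in substance.
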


\begin{proof}
Let $a_1, a_2, \ldots, a_s \in D$ and set $I= a_1D \cap a_2D \cap \ldots \cap a_sD$. We show that $$I= \bigcup_{n \geq N}( a_1D_n \cap a_2D_n \cap \ldots \cap a_sD_n)$$ for $N$ large enough such that $a_1, a_2, \ldots, a_s \in D_N$. The containment ''$\supseteq$'' is clear. For the other, take $d \in I$, then $d = a_1 d_1 = a_2d_2= \ldots = a_s d_s$ for $d_1, \ldots, d_s \in D$. It follows that $d \in a_1D_n \cap a_2D_n \cap \ldots \cap a_sD_n$ for any $n$ large enough such that $ a_i, d_i \in D_n $ for every $i=1, \ldots, s$. Now, since $D_n$ is a GCD domain, $ a_1D_n \cap a_2D_n \cap \ldots \cap a_sD_n = c_n D_n $ for some $c_n \in D_n$ ($c_n$ is the least common multiple in $D_n$ of $a_1, a_2, \ldots, a_s$). Since for every $n \geq N$, $c_n D_n \subseteq c_{n+1} D_{n+1}$, it is easy to see that $$I= \bigcup_{n \geq N}(c_nD_n) = \bigcup_{n \geq N}(c_nD). $$ Hence $I$ is an ascending union of principal ideals of $D$ and therefore it is either principal or it is not finitely generated.
\end{proof}

\begin{remark} 
\label{r1}
Theorem \ref{uniongcd} can be proved also observing that a directed union of GCD domains is a Schreier domain and applying Theorem 3.6 of \cite{zafrullah}. In particular a pre-Schreier domain is a BID. Anyway, we will need the argument of the elementary proof given for Theorem \ref{uniongcd} later in this article.
\end{remark}

\begin{definition} 
\label{d2}
 An integral domain $D$ is a Strong Bezout Intersection Domain (SBID) if, for $a_1, \ldots, a_s \in D$, whenever  the ideal $I= a_1D \cap a_2D \cap \ldots \cap a_sD$ is finitely generated, then $I=a_iD$ for some $i=1, \ldots, s.$
 \end{definition}

It follows easily from the definition that a SBID is a finite conductor domain or a GCD domain if and only if it is a valuation domain.

\begin{prop} 
\label{localSBID}
A SBID is local.
\end{prop}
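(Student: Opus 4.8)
The plan is to prove locality by showing that the set of non-units of $D$ forms an ideal, since a commutative ring in which the non-units form an ideal is exactly a local ring. As the non-units are automatically closed under multiplication by arbitrary elements of $D$ (if $a$ is a non-unit then so is $ra$ for every $r$), the only point requiring work is closure under addition. I would therefore argue by contradiction, assuming that $D$ admits two non-units $a, b$ whose sum $a+b$ is a unit; equivalently, this is the pair one would extract from two distinct maximal ideals, which are always comaximal.

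The one computational ingredient I would record first is the elementary fact that for comaximal elements, i.e. $a, b \in D$ with $aD + bD = D$, one has $aD \cap bD = abD$. The inclusion $abD \subseteq aD \cap bD$ is immediate. For the reverse inclusion, write $1 = ax + by$; then for any $z = a\alpha = b\beta \in aD \cap bD$ one computes $z = z(ax+by) = (b\beta)ax + (a\alpha)by = ab(\beta x + \alpha y) \in abD$. In particular $aD \cap bD$ is principal, hence finitely generated.

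With this in hand the contradiction is quick. If $a, b$ are non-units with $a + b$ a unit, then $a+b \in aD + bD$ forces $aD + bD = D$, so by the formula above $I = aD \cap bD = abD$ is finitely generated. Applying the SBID hypothesis of Definition \ref{d2} in the case $s = 2$ yields $I = aD$ or $I = bD$, that is $abD = aD$ or $abD = bD$; the first equality makes $b$ a unit and the second makes $a$ a unit, contradicting the choice of $a$ and $b$. Hence no comaximal pair of non-units exists, the non-units are closed under addition, and $D$ is local. The argument is essentially free of obstacles: its only content is the intersection identity for comaximal principal ideals, and the one thing to set up carefully is the reduction of locality to additive closure of the non-units, which is cleaner than tracking maximal ideals directly. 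Note that only the two-element instance of the SBID condition is actually used.
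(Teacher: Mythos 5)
Your proof is correct and follows essentially the same route as the paper's: reduce locality to additive closure of the non-units, assume $a+b$ is a unit for non-units $a,b$, use the comaximal identity $aD \cap bD = abD$ to get a finitely generated (principal) intersection, and invoke the SBID property to force $a$ or $b$ to be a unit. The only difference is that you spell out the comaximal intersection identity, which the paper takes for granted.
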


\begin{proof}
Let $a,b$ be non-units in a SBID $D$. We only need to prove that $a+b$ is not a unit. But, if by way of contradiction we assume $a+b$ to be a unit, then $(a,b)D=D$ and hence $aD \cap bD = abD$ is principal. Thus either $abD=aD$ or $abD=bD$ which implies that either $a$ or $b$ is a unit.
\end{proof}

\begin{definition} 
\label{gcdn}
Let $D = \bigcup_{n=1}^{\infty} D_n$ be the directed union of some GCD domains $D_n$. For $a,b \in D$, call $ \gcd_n(a,b) $ a greatest common divisor of $a,b$ in the ring $D_n$ (it exists when $a,b \in D_n$). In the next we are going often to say $\gcd_n(a,b) = \gcd_m(a,b)$ for $n \leq m$, but meaning that they are associated in $D_m $ and not necessarily equal.
 \end{definition}

 It follows from the proof of Theorem \ref{uniongcd}, that in a directed union of GCD domains $D$, a finite intersection of principal ideals $I= a_1D \cap a_2D \cap \ldots \cap a_sD$ is principal if and only if the least common multiple of $a_1, a_2, \ldots, a_s$ in $D_n$ stabilizes for $n \gg 0.$ It is easy to check that this least common multiple in $D_n$ can be expressed via the Inclusion–Exclusion principle as a fraction in term of the product $ a_1a_2 \cdots a_s $ and all the terms of the form 
$\gcd_n(a_{i_1}, a_{i_2}, \ldots, a_{i_k})$ for all the possible combinations $i_j \in \lbrace 1,2, \ldots, s \rbrace$ and $2 \leq k \leq s.$ Hence, assuming that the integral domains $D_n$ are Noetherian (and thus UFDs), the least common multiple of $a_1, a_2, \ldots, a_s$ in $D_n$ stabilizes for $n \gg 0$ if and only if for every $i \neq j $, $\gcd_n(a_i, a_j)$ stabilizes (notice that if $D_n$ is Noetherian and $\gcd_n(a,b)$ stabilizes, then also $\gcd_n(a,b,c)$ stabilizes). 
 
In the next lemma and in its corollary, we derive from this fact a characterization of when a finite intersection of principal ideals of $D$ is principal involving the use of primitive ideals.

\begin{definition} \cite{arnold}
\label{primitive}
 A proper ideal $I$ of an integral domain $D$ is \it primitive \rm if $I$ is not contained in any proper principal ideal of $D$.
 \end{definition}

\begin{lemma} 
\label{primitiv}
 Let $D = \bigcup_{n=1}^{\infty} D_n$ be the directed union of some GCD domains $D_n$. Let $a,b \in D$ let $d_n:=\gcd_n(a,b)$. Then, there exists a positive integer $N$ such that $ \gcd_n(a,b) = \gcd_N(a,b) $ for every $n \geq N$ if and only if the ideal $J_n = \left( \dfrac{a}{d_n}, \dfrac{b}{d_n} \right)D $ is primitive for some $n$.
\end{lemma}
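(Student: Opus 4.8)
The plan is to translate both sides of the equivalence into a single statement about the chain of principal ideals $d_nD$ inside the union $D$, and to read all divisibility and associativity relations consistently in $D$ (this is the notion of stabilization already used in the proof of Theorem \ref{uniongcd}, where $I$ is principal exactly when the ideals $c_nD$ are eventually constant). First I would record the two elementary facts that power the translation. Since $d_n=\gcd_n(a,b)$ divides both $a$ and $b$ already in $D_n\subseteq D_m$, it divides $d_m$ in $D_m$ for every $m\geq n$, so the ideals $d_nD$ form a descending chain and ``$\gcd_n(a,b)$ stabilizes'' means precisely that this chain is eventually constant. Moreover, because each $D_m$ is a GCD domain, $\gcd_m(a,b)=\gcd_m\!\big(d_n\cdot\tfrac{a}{d_n},\,d_n\cdot\tfrac{b}{d_n}\big)=d_n\cdot\gcd_m\!\big(\tfrac{a}{d_n},\tfrac{b}{d_n}\big)$, so writing $g_m:=\gcd_m\!\big(\tfrac{a}{d_n},\tfrac{b}{d_n}\big)\in D_m$ we get $d_m=d_n g_m$ up to associates in $D_m$. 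Hence $d_mD=d_nD$ if and only if $g_m$ is a unit of $D$.

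Next I would identify primitivity of $J_n$ with the absence of nonunit common divisors. By Definition \ref{primitive}, $J_n=\big(\tfrac{a}{d_n},\tfrac{b}{d_n}\big)D$ is primitive if and only if it lies in no proper principal ideal of $D$, that is, if and only if no nonunit $c\in D$ divides both $\tfrac{a}{d_n}$ and $\tfrac{b}{d_n}$. The key observation is that the common divisors of $\tfrac{a}{d_n},\tfrac{b}{d_n}$ in $D$ are controlled by the elements $g_m$: any common divisor $c\in D$ lies, together with the relevant quotients, in some $D_m$ with $m\geq n$, and therefore divides $g_m=\gcd_m\!\big(\tfrac{a}{d_n},\tfrac{b}{d_n}\big)$ in $D_m$; conversely each $g_m$ is itself such a common divisor. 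Consequently $J_n$ is primitive if and only if every $g_m$ with $m\geq n$ is a unit of $D$.

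Putting the two translations together yields both implications with the same index. If $J_n$ is primitive for some $n$, then all $g_m$ with $m\geq n$ are units of $D$, so $d_mD=d_nD$ for every $m\geq n$ and the chain stabilizes at $N=n$. Conversely, if $\gcd_n(a,b)$ stabilizes, say $d_mD=d_ND$ for all $m\geq N$, then each $g_m$ formed with respect to $d_N$ is a unit of $D$; any common divisor $c\in D$ of $\tfrac{a}{d_N},\tfrac{b}{d_N}$ then divides one of these units and is itself a unit, so $J_N$ is primitive.

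The one place that needs genuine care — and the step I expect to be the main obstacle — is the bookkeeping of units versus nonunits across the tower $D_m\subseteq D$, since primitivity is a condition in the union $D$ while the $g_m$ naturally live in the individual rings $D_m$. The clean resolution is to phrase stabilization as eventual constancy of the ideals $d_nD$ inside $D$ (equivalently, $g_m$ being a unit of $D$ rather than merely of $D_m$); once that convention is fixed, the equivalence of gcd-stabilization with the existence of a primitive $J_n$ is immediate from the two translations above.
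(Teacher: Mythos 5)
Your proposal is correct and follows essentially the same route as the paper's proof: in both arguments the point is that $d_m$ is associated to $d_n\gcd_m\bigl(\tfrac{a}{d_n},\tfrac{b}{d_n}\bigr)$ in $D_m$, so primitivity of $J_n$ is equivalent to the cofactors $g_m=\gcd_m\bigl(\tfrac{a}{d_n},\tfrac{b}{d_n}\bigr)$ being units of $D$, which is equivalent to the chain $d_mD$ being constant from $n$ on. The convention you flag at the end (reading stabilization as eventual constancy of the ideals $d_nD$, i.e.\ unit of $D$ rather than of $D_m$) is precisely what the paper's own proof implicitly uses, and it is the reading needed for the later applications in Corollary \ref{primitiveprop} and Theorem \ref{directSBID}.
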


\begin{proof}
First assume that, for every $n$, there exists a non unit $x_n \in D$ such that $J_n \subseteq x_nD$. Thus, since $\gcd_n( \frac{a}{d_n}, \frac{b}{d_n} )= 1$, we have $x_n \not \in D_n$. But $x_n \in D_m$ for some $m >n$, and hence $d_n$ properly divides $d_m$. 

Conversely, assume that $J_n$ is primitive. Since $d_n$ divides $d_m$ in $D$ for every $m > n$, then $J_n \subseteq \frac{d_m}{d_n}D$ because $\frac{a}{d_n}= \frac{d_m}{d_n} \frac{a}{d_m}$ and the same happens for $b$. But $J_n$ is primitive and therefore $d_n$ is associated to $d_m$ for every $m > n.$ 
\end{proof}




\begin{corollary} 
\label{primitiveprop}
Let $D = \bigcup_{n=1}^{\infty} D_n$ be the directed union of some Noetherian GCD domains $D_n$. Pick some elements $a_1, a_2, \ldots, a_n \in D$ and let $d_{n_{ij}}:=\gcd_n(a_i, a_j)$. The ideal $$I= a_1D \cap a_2D \cap \ldots \cap a_sD$$ is principal if and only if for every $i,j$, the ideals $$  \left( \dfrac{a_i}{d_{n_{ij}}}, \dfrac{a_j}{d_{n_{ij}}} \right)D $$ are primitive for some $n$.
\end{corollary}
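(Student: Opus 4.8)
The plan is to reduce the statement to Lemma~\ref{primitiv} applied to each pair of elements, using the criterion for principality of a finite intersection of principal ideals already recorded in the proof of Theorem~\ref{uniongcd} and in the discussion following it.

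First I would recall that, by the argument in the proof of Theorem~\ref{uniongcd}, the ideal $I = a_1 D \cap a_2 D \cap \ldots \cap a_s D$ coincides with the ascending union $\bigcup_{n \geq N} c_n D$, where $c_n$ is a least common multiple of $a_1, a_2, \ldots, a_s$ in the Noetherian GCD domain $D_n$ and $N$ is chosen so that all the $a_i$ lie in $D_N$. Since an ascending union of principal ideals of $D$ is principal precisely when the chain of ideals stabilizes, $I$ is principal if and only if the sequence $(c_n D)_{n \geq N}$ stabilizes, that is, if and only if $c_n$ stabilizes up to associates for $n \gg 0$.

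Next I would invoke the inclusion--exclusion expression for $c_n$ observed just before Lemma~\ref{primitiv}: since each $D_n$ is a UFD, $c_n$ can be written as a fraction whose numerator is the product $a_1 a_2 \cdots a_s$ and whose denominator is assembled from the various terms $\gcd_n(a_{i_1}, a_{i_2}, \ldots, a_{i_k})$. Using the parenthetical remark recorded there --- namely that in a Noetherian GCD domain stabilization of $\gcd_n(a,b)$ forces stabilization of $\gcd_n(a,b,c)$, and hence inductively of every higher gcd --- I would conclude that $c_n$ stabilizes if and only if the pairwise gcd $\gcd_n(a_i, a_j) = d_{n_{ij}}$ stabilizes for every $i \neq j$.

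Finally, I would apply Lemma~\ref{primitiv} to each pair $(a_i, a_j)$: the sequence $d_{n_{ij}} = \gcd_n(a_i, a_j)$ stabilizes if and only if the ideal $\left( \dfrac{a_i}{d_{n_{ij}}}, \dfrac{a_j}{d_{n_{ij}}} \right) D$ is primitive for some $n$. Chaining the three equivalences --- $I$ principal iff $c_n$ stabilizes, $c_n$ stabilizes iff all pairwise gcds stabilize, and each pairwise gcd stabilizes iff the corresponding ideal is primitive for some $n$ --- yields the statement. The only step requiring genuine care, and the part I would expect to be the main (though modest) obstacle, is the passage from pairwise stabilization to stabilization of the full least common multiple; this rests entirely on the UFD structure of the rings $D_n$ and on the remark preceding Lemma~\ref{primitiv}, so no difficulty beyond bookkeeping over the combinations of indices arises.
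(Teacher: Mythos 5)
Your proposal is correct and follows essentially the same route as the paper: the corollary there is stated without a separate proof precisely because it is the combination of the stabilization criterion recorded after Definition~\ref{gcdn} (intersection principal iff the least common multiples $c_n$ stabilize, iff all pairwise $\gcd_n(a_i,a_j)$ stabilize, via inclusion--exclusion and the Noetherian/UFD remark) with Lemma~\ref{primitiv} applied to each pair. Your chaining of these three equivalences, including the observation that an ascending union of principal ideals is principal exactly when the chain stabilizes, is exactly the intended argument.
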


\begin{lemma} 
\label{lemmax}
Let $\m$ be the maximal ideal of a local domain $D$. If $\m $ it is not a directed union of principal ideals, then there exist $a,b \in \m $ such that the ideal $(a,b)D$ is primitive.
\end{lemma}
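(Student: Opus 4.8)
The plan is to prove the contrapositive: assuming that $(a,b)D$ fails to be primitive for \emph{every} pair $a,b \in \m$, I would show that $\m$ is a directed union of principal ideals. The key preliminary observation is that, because $D$ is local, $\m$ is precisely the set of non-units of $D$; hence a principal ideal $cD$ is proper exactly when $c \in \m$, and every proper principal ideal is automatically contained in $\m$. This dictionary between ``non-unit'' and ``element of $\m$'' is what lets me transfer the hypothesis, which is phrased via the notion of primitivity from Definition \ref{primitive}, into a directedness statement about principal subideals of $\m$.

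First I would fix arbitrary $x,y \in \m$ and consider the two-generated ideal $(x,y)D$. Since $x,y \in \m$ we have $(x,y)D \subseteq \m \subsetneq D$, so this ideal is proper and the notion of primitivity applies to it. By the standing (contrapositive) hypothesis, $(x,y)D$ is not primitive, so by definition there is a non-unit $c \in D$ with $(x,y)D \subseteq cD$; locality then forces $c \in \m$. Consequently $xD \subseteq cD$ and $yD \subseteq cD$, where $cD$ is a proper principal ideal lying inside $\m$.

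This is exactly the assertion that the family of all principal ideals contained in $\m$ is pairwise directed under inclusion. A routine induction upgrades pairwise directedness to the statement that every finite subfamily admits a common principal upper bound inside $\m$, so the family is directed; and since each $a \in \m$ lies in its own member $aD$, the union of the family is all of $\m$. Hence $\m$ is a directed union of principal ideals, which establishes the contrapositive and therefore the lemma.

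I expect no substantial obstacle here: the real content of the statement is the translation between the ideal-theoretic notion of primitivity and the order-theoretic notion of a directed union of principal ideals. The only two points requiring care are verifying that $(x,y)D$ is proper (so that primitivity is even meaningful) and invoking locality to guarantee that the dominating element $c$ sits in $\m$ rather than merely being a non-unit; both are immediate consequences of the local hypothesis. If one prefers to avoid the contrapositive, the very same computation in fact yields the equivalence of the two conditions directly.
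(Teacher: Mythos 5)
Your proof is correct and follows essentially the same route as the paper: both argue the contrapositive, using the failure of primitivity of every two-generated ideal $(x,y)D$ with $x,y \in \m$ to produce, for each pair of principal ideals inside $\m$, a common proper principal upper bound, which exhibits $\m$ as a directed union of principal ideals. Your write-up is in fact a bit more careful than the paper's (explicitly noting that properness of $(x,y)D$ and membership $c \in \m$ follow from locality), but the underlying idea is identical.
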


\begin{proof}
Set $\m= \sum_{\lambda \in \Lambda} x_{\lambda}D$ and assume that $(x_{\lambda}, x_{\mu})D \subseteq x_{\lambda^{\star}}D \subseteq \m$ for every $ \lambda, \mu \in \Lambda $. We can also assume $ \lambda^{\star} \in \Lambda $ since we do not assume that $ \lbrace x_{\lambda} \rbrace_{\lambda \in \Lambda}  $ is a minimal set of generators for $\m$.  Hence we can write $ \m = \bigcup_{\lambda \in \Lambda}x_{\lambda}^{\star}D $ which is a directed union of principal ideals.
\end{proof}

\medskip

Next theorem generalizes the fact that a quadratic Shannon extension is a GCD domain (or a finite conductor) if and only if it is a valuation domain \cite[Theorem 6.2]{GHOT}. Indeed a quadratic Shannon extension is a local directed union of regular local rings (hence of Noetherian UFDs) and its maximal ideal is a directed union of principal ideals. We are going to make use of the next result also when we will study the GCD property for monoidal Shannon extensions in Theorem \ref{monoidal}. 

\begin{theorem} 
\label{directSBID}
Let $D = \bigcup_{n=1}^{\infty} D_n$ be the directed union of some Noetherian GCD domains $D_n$. Then the following conditions are equivalent:
\begin{enumerate}
\item $D$ is a SBID.
\item $D$ is local and its maximal ideal $ \m $ is a directed union of principal ideals.
\end{enumerate}
\end{theorem}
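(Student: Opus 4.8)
The plan is to prove the two implications separately, using throughout that $D$ is a BID by Theorem~\ref{uniongcd} (so that a finitely generated intersection of principal ideals is automatically principal) and that each $D_n$, being a Noetherian GCD domain, is a UFD, so that the least common multiple / greatest common divisor criterion of the discussion following Definition~\ref{gcdn} applies.

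For the implication $(1)\Rightarrow(2)$, localness of $D$ is immediate from Proposition~\ref{localSBID}, so only the structure of $\m$ needs work, and I would argue by contradiction. If $\m$ were not a directed union of principal ideals, Lemma~\ref{lemmax} produces $a,b\in\m$ with $(a,b)D$ primitive. The key observation is that primitivity forces every common divisor of $a$ and $b$ in $D$ to be a unit; in particular $d_n:=\gcd_n(a,b)$, which divides both $a$ and $b$ in $D_n\subseteq D$, is a unit for every $n$. Hence $\gcd_n(a,b)$ stabilizes trivially, and since $aD_n\cap bD_n=\mathrm{lcm}_n(a,b)D_n=abD_n$ for all large $n$, we obtain $aD\cap bD=abD$, a principal (hence finitely generated) ideal. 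The SBID hypothesis then forces $abD=aD$ or $abD=bD$, i.e.\ $a$ or $b$ is a unit, contradicting $a,b\in\m$.

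For the implication $(2)\Rightarrow(1)$, I would first isolate the pairwise case as the crucial lemma: if $D$ is local with $\m$ a directed union of principal ideals and $aD\cap bD$ is principal, then $a\mid b$ or $b\mid a$. Indeed, principality of $aD\cap bD$ makes $\gcd_n(a,b)$ stabilize to some $d$; then no non-unit $x$ of $D$ can divide both $a/d$ and $b/d$, for such an $x$ would satisfy $xd\mid a$ and $xd\mid b$, and passing to a $D_m$ containing the relevant quotients would give $xd\mid\gcd_m(a,b)$, forcing $x$ to be a unit since $\gcd_m(a,b)$ is associate to $d$. Thus $(a/d,b/d)D$ lies in no proper principal ideal, which is exactly the content of Lemma~\ref{primitiv}. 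Were both $a/d$ and $b/d$ non-units, they would lie in a common $xD$ with $x\in\m$ because $\m$ is a directed union of principal ideals, contradicting the previous sentence; hence one of $a/d,b/d$ is a unit, which says $b\mid a$ or $a\mid b$.

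Finally I would assemble the general case. Given $a_1,\ldots,a_s$ with $I=a_1D\cap\cdots\cap a_sD$ finitely generated, $I$ is principal by the BID property, so the least common multiple of $a_1,\ldots,a_s$ in $D_n$ stabilizes; by the inclusion--exclusion discussion after Definition~\ref{gcdn} (here is where the UFD hypothesis enters) every pairwise $\gcd_n(a_i,a_j)$ then stabilizes, so each $a_iD\cap a_jD$ is principal. The pairwise lemma makes $a_1,\ldots,a_s$ totally ordered by divisibility, so some $a_j$ is divisible by all the others; for that index $a_iD\supseteq a_jD$ for every $i$, whence $I=a_jD$. The main obstacle is precisely this reduction from the $s$-fold intersection to the pairwise intersections: one must ensure that principality of the full intersection propagates to each pair, which rests on the Noetherian (UFD) hypothesis on the $D_n$ and the gcd/lcm stabilization criterion, since in a general BID a finitely generated intersection need not force its sub-intersections to be finitely generated.
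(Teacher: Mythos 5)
Your proposal is correct and rests on exactly the same ingredients as the paper's proof: Proposition~\ref{localSBID} and Lemma~\ref{lemmax} for $(1)\Rightarrow(2)$, and for $(2)\Rightarrow(1)$ the BID property (Theorem~\ref{uniongcd}), the gcd/lcm stabilization discussion after Definition~\ref{gcdn}, and the key observation that when $\m$ is a directed union of principal ideals any two non-units lie in a common proper principal ideal, which kills primitivity. The difference is organizational. In $(2)\Rightarrow(1)$ the paper normalizes to $a_i \notin a_jD$ for $i \neq j$ and argues contrapositively: Corollary~\ref{primitiveprop} together with the key observation shows $I$ is not principal, hence (BID) not finitely generated. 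You instead run the argument directly: you isolate a pairwise lemma ($aD \cap bD$ principal implies $a \mid b$ or $b \mid a$, proved via Lemma~\ref{primitiv}), reduce the $s$-fold case to pairs through the inclusion--exclusion stabilization (where, as you correctly note, the Noetherian/UFD hypothesis on the $D_n$ is what makes this reduction legitimate), and finish with a total-ordering-by-divisibility argument giving $I = a_jD$ explicitly. Your version buys an explicit identification of which generator realizes $I$ and makes transparent why principality of the full intersection propagates to the pairwise ones; the paper's version is shorter because Corollary~\ref{primitiveprop} already packages the $s$-fold-to-pairwise reduction. One small slip in your $(1)\Rightarrow(2)$: primitivity of $(a,b)D$ makes $d_n := \gcd_n(a,b)$ a unit of $D$, not of $D_n$, so your claimed equality $aD_n \cap bD_n = abD_n$ need not hold inside $D_n$; however the union $\bigcup_n (ab/d_n)D_n$ still equals $abD$ precisely because each $d_n$ is a unit of $D$ (this is how the paper phrases the computation), so your conclusion and the contradiction with the SBID hypothesis stand.
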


\begin{proof}
Assume (1). The fact that $D$ is local is proved in Proposition \ref{localSBID}. Let $\m$ be the maximal ideal of $D$ and assume that it is not a directed union of principal ideals. By Lemma \ref{lemmax} this implies that there exist $a,b \in \m $ such that the ideal $(a,b)D$ is primitive. Hence for every $n$ large enough such that $a,b \in D_n$, we have that $(a,b)D_n$ is a proper ideal of $D_n$ and moreover, since $(a,b)D$ is a primitive ideal of $D$, we have that $ d_n:= \gcd_n(a,b) $ is a unit of $D$. Hence $$ aD \cap bD = \bigcup_{n \geq N}( aD_n \cap bD_n) = \bigcup_{n \geq N} \left( \dfrac{ab}{d_n}D_n \right) = abD. $$ This contradicts the fact that $D$ is a SBID since both $a$ and $b$ are non-unit of $D$. 

Conversely, assume (2) and take $I= a_1D \cap a_2D \cap \ldots \cap a_sD$ for $a_1, a_2, \ldots, a_s \in D$. We may assume that $a_i \not \in a_jD$ for every $i \neq j$ and prove that in this case $I$ is not finitely generated. 
By Corollary \ref{primitiveprop}, $I$ it is not finitely generated if there exists $i, j \in \lbrace 1, \ldots, s \rbrace$ such that the ideal $$ J_n= \left( \dfrac{a_i}{d_{n_{ij}}}, \dfrac{a_j}{d_{n_{ij}}} \right)D $$ is not primitive for every $n \gg 0$ (we recall that, taking the same notation of Corollary \ref{primitiveprop}, $d_{n_{ij}}:=\gcd_n(a_i, a_j)$ ). 
By the assumption of $a_i \not \in a_jD$ for every $i \neq j$, we have $ \frac{a_i}{d_n}, \frac{a_j}{d_n} \in \m $. 
Since $\m$ is a directed union of principal ideals, there exists a principal ideal $xD$ of $D$ which contains $ \frac{a_i}{d_n}, \frac{a_j}{d_n} $ and hence $J_n$ is not primitive for every $n$. 
\end{proof}



  
 \medskip

\section{Generalities and examples of directed unions of local monoidal transforms} \label{monsection}

In this section we introduce the directed unions of local monoidal transforms of a regular local ring, calling these rings \it monoidal Shannon extensions\rm. We will recall some results about directed union of quadratic transforms from \cite{HLOST} and use them to compare quadratic Shannon extensions with examples of monoidal Shannon extensions. Regular local rings are well known to be UFD and hence GCD domain, thus directed unions of local monoidal transforms are explicit examples of the rings described in the previous section.

\begin{definition} \label{defmontrasf}
  Let $(R,\m)$ be a regular local ring and let $\p$ be a  regular prime ideal of $R$
  with $\hgt \p \ge 2$.  Let $x \in \p \setminus \p^2$ and let $\m_1$ be a maximal ideal of $R[\frac{\p}{x}]$ such that  $\m_1 \cap R = \m$. Then, the ring $$R_1=R \left[ \frac{\p}{x} \right]_{\m_1}$$ is called a \textit{local monoidal transform} of $R$. If $\p=\m$ is the maximal ideal of $R$, the ring $R_1$ is called a \textit{local quadratic transform} of $R$. 
  \end{definition}

 A useful tool for the investigation on local monoidal transforms is the canonical map $\Spec R_1 \to \Spec R$ sending a prime ideal $Q$ of $R_1$ to its contraction $Q \cap R.$ One property of interest is the biregularity of such map at one prime ideal of $R_1$.
  
  \begin{definition} \label{birdef}
  Given an extension of integral domains $ A \hookrightarrow B $, we say that the map $\Spec B \to \Spec A$ is biregular at a prime ideal $Q \subseteq B$ if $B_Q= A_{(Q \cap A)}$.
  \end{definition} 
  
  We record in Proposition~\ref{blowup} well known properties of $R_1$ and of the 
  map $\Spec R_1 \to \Spec R$. The references for them are \cite[Section 2]{Sha} and \cite[Section 1]{Abh3}.
  
  \begin{proposition} \label{blowup}  Assume notation as above.  Then:
  \begin{enumerate}
  \item  $R_1$ is a regular local ring.
  \item
  $\p R_1 = xR_1$ is a 
  height one prime ideal of $R_1$.  
  \item $xR_1 \cap R = \p$ and $(R_1)_{xR_1}$ is the order valuation ring defined by the 
  powers of $\p$.
  \item
  $R_1[\frac{1}{x}]=R[\frac{1}{x}]$. 
  \item
  The map $\Spec R_1 \to \Spec R$ is  biregular at every prime ideal $\q$ of $R_1$ such that $xR_1 \nsubseteq \q$. In particular, if $\q$ is a height one prime of $R_1$ other than $xR_1$,
  then $\q \cap R $ is a height one prime of $R$, and $R_{\q \cap R} = (R_1)_\q$.
 \end{enumerate}
   \end{proposition}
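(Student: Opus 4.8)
The plan is to carry out all the computations on the affine blow-up algebra $T:=R[\frac{\p}{x}]$ and then recover the assertions about $R_1=T_{\m_1}$ by localizing at $\m_1$. In this setting $x$ is a regular parameter of $R$ lying in the regular prime $\p$ (equivalently, a minimal generator of $\p$), so I may write $\p=(x,y_1,\dots,y_r)$ with $x,y_1,\dots,y_r$ part of a regular system of parameters of $R$; putting $t_i:=\frac{y_i}{x}$ gives $T=R[t_1,\dots,t_r]$ and, since $y_i=xt_i$, the equality $\p T=xT$. As each $t_i\in R[\frac1x]$ and $R\subseteq T$, inverting $x$ yields $R[\frac{\p}{x}][\frac1x]=R[\frac1x]$, which is the content of (4); in particular $T$ is an overring of $R$ with the same fraction field. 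The keystone of everything else is the isomorphism $T/xT\cong (R/\p)[t_1,\dots,t_r]$ of $(R/\p)$-algebras. The surjection $(R/\p)[X_1,\dots,X_r]\to T/xT$, $X_i\mapsto\overline{t_i}$, is immediate once one notes $y_i=xt_i\in xT$, so $\p\mapsto 0$; injectivity is where I would use that $\p$ is generated by the regular sequence $x,y_1,\dots,y_r$, so that $\gr_\p R\cong(R/\p)[\overline{x},\overline{y_1},\dots,\overline{y_r}]$ is a polynomial ring and the chart of the exceptional divisor cut out by $x$ is exactly $\mathbb{A}^r_{R/\p}$.

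Granting the isomorphism, $T/xT$ is a domain, hence $xT$ is a nonzero prime of height one (Krull's principal ideal theorem); localizing at $\m_1\ni x$ gives that $\p R_1=xR_1$ is a height-one prime of $R_1$, which is (2). For (1) I would first show $T$ is regular and then localize. At a prime $\q$ with $x\notin\q$ one has $T_\q=(R[\frac1x])_\q=R_{\q\cap R}$, a localization of the regular ring $R$. At a prime $\q\supseteq xT$ the ring $T_\q$ is Noetherian local, $x$ is a nonzerodivisor (as $T$ is a domain), and $T_\q/xT_\q$ is a localization of the polynomial ring $(R/\p)[t_1,\dots,t_r]$, hence regular; the standard criterion ``$A/fA$ regular with $f$ a nonzerodivisor implies $A$ regular'' then forces $T_\q$ regular. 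Thus $T$ is regular and $R_1=T_{\m_1}$ is a regular local ring, giving (1).

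For the first part of (3), contracting $xT$ along $R\to T/xT$ computes $xT\cap R=\Ker(R\to (R/\p)[t_1,\dots,t_r])=\p$, and since $xT$ is prime and contained in $\m_1$ one gets $xR_1\cap R=xT\cap R=\p$. For the valuation statement I would again exploit that $\gr_\p R$ is a domain: this makes the $\p$-adic order $\ord_\p(f)=\max\{n:f\in\p^n\}$ a rank-one discrete valuation on $R$, which extends to $\mathrm{Frac}(R)$ and has $x$ as a uniformizer because $\ord_\p(x)=1$; I would then identify its valuation ring $V$ with the DVR $(R_1)_{xR_1}=T_{xT}$ by checking $T\subseteq V$ and $xT=\m_V\cap T$ (equivalently $x^nT\cap R=\p^n$ for all $n$). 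Finally (5) is pure localization: for $\q\subseteq\m_1$ with $x\notin\q$ we get $(R_1)_\q=T_\q=(R[\frac1x])_\q=R_{\q\cap R}$, which is biregularity; and if $\q$ is a height-one prime of $R_1$ other than $xR_1$ then $x\notin\q$ (else $xR_1\subseteq\q$ would force equality by the height), so $(R_1)_\q=R_{\q\cap R}$ is a one-dimensional regular local ring and $\hgt(\q\cap R)=1$.

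The main obstacle is concentrated in the two structural inputs used above: the isomorphism $T/xT\cong(R/\p)[t_1,\dots,t_r]$ and the fact that $\gr_\p R$ is a polynomial ring. These are precisely what make $xT$ prime, what describe the exceptional fibre, and what turn the $\p$-adic order into a genuine valuation realized by $(R_1)_{xR_1}$; both depend on $\p$ being generated by a regular sequence, which is where the hypothesis that $\p$ is a regular prime of the regular local ring $R$ enters. Everything after that is routine bookkeeping with localizations, and in the final write-up I would cite \cite[Section 2]{Sha} and \cite[Section 1]{Abh3} for the regular-sequence input and spell out only the localization steps.
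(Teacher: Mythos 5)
The paper itself offers no argument for Proposition \ref{blowup}: it records the items as well known and refers to \cite[Section 2]{Sha} and \cite[Section 1]{Abh3}. Your proposal is therefore a genuinely different route, namely the standard self-contained one: pass to the chart $T=R[\frac{\p}{x}]$, prove $T/xT\cong (R/\p)[t_1,\ldots,t_r]$ using that $\gr_\p R$ is a polynomial ring over $R/\p$ (the regular-sequence input), and then obtain items (1), (2), (3), (5) by localization bookkeeping: primeness and height one of $xT$, regularity of $T$ via the criterion ``$A/fA$ regular with $f$ a nonzerodivisor in the maximal ideal implies $A$ regular'', the contraction $xT\cap R=\p$, the identification of $T_{xT}$ with the $\ord_\p$-valuation ring through $x^nT\cap R=\p^n$, and biregularity at primes not containing $x$. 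These deductions are all correct, and your division of labor (cite the regular-sequence facts, spell out the localizations) is reasonable; what the paper's citation buys is brevity, while your version makes visible exactly where the hypothesis that $\p$ is a regular prime enters.

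Two points, however, need repair. First, your opening parenthetical is not valid under the paper's actual hypothesis: Definition \ref{defmontrasf} assumes only $x\in\p\setminus\p^2$, and such an $x$ need not be a minimal generator of $\p$ (in $R=k[[u,v,w]]$ with $\p=(u,v)$, the element $x=wu$ lies in $\p\setminus\p^2$ but also in $\m\p$). So the decomposition $\p=(x,y_1,\ldots,y_r)$ with $x,y_1,\ldots,y_r$ part of a regular system of parameters does not follow from $x\in\p\setminus\p^2$ alone; you must invoke the other standing assumption, the existence of $\m_1$ with $\m_1\cap R=\m$: if $x=\sum_j m_jp_j\in\m\p$ with $m_j\in\m$, $p_j\in\p$, then $1=\sum_j m_j\frac{p_j}{x}\in\m T$, so no prime of $T$ lies over $\m$ and $R_1$ does not exist; hence $x\notin\m\p$ and your setup is legitimate. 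Second, what you prove for (4) is $R[\frac{\p}{x}][\frac{1}{x}]=R[\frac{1}{x}]$, a statement about the chart $T$, not about $R_1=T_{\m_1}$; and the literal statement for $R_1$ is in fact false: at least one of $t_1$, $1+t_1$ lies outside $\m_1$ and so is a unit of $R_1$, yet neither $\frac{x}{y_1}$ nor $\frac{x}{x+y_1}$ lies in $R[\frac{1}{x}]$, because $y_1$ and $x+y_1$ are prime elements of $R$ not dividing any power of $x$; hence $R_1[\frac{1}{x}]\supsetneq R[\frac{1}{x}]$. The correct assertion, and the one that supports the later uses such as Theorem \ref{specialcase}(6), is that $T[\frac{1}{x}]=R[\frac{1}{x}]$, so that $R_1[\frac{1}{x}]$ is a localization of $R[\frac{1}{x}]$. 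You should state this discrepancy explicitly rather than declaring your chart computation to be ``the content of (4)''.
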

  
  
  The properties of $R_1$ given by Proposition~\ref{blowup} can be used inductively to study iterated sequences of local monoidal transforms.
  
  \begin{definition} 
Let   $\{(R_n, \m_n) \}_{n \in \N}$ be  an infinite directed sequence of  local monoidal (resp. quadratic) transforms of a regular local ring $R$, 
that is for every $n$,  $$R_{n+1}= R_{n} \left[ \dfrac{\p_n}{x_n} \right] _{\m_{n+1}}, $$ where $\p_n$ is a regular prime ideal (resp. maximal ideal) of the regular local ring $R_n$, $x_n \in \p_n \setminus \p_n^2$ 
and the inclusion map $R_n \hookrightarrow R_{n+1}$ is a  local map (i.e. $ \m_{n} \subseteq \m_{n+1} $). 

Since we are interested in studying the ideal theoretic properties of an infinite union of such rings, we assume all the ring $(R_n, \m_n)$ to have the same dimension $d \geq 2$.

The infinite directed union $S = \bigcup_{n \in \N}R_n$ is a local integrally closed overring of $R$ with maximal ideal $\m_S = \bigcup_{n \in \N} \m_n$. We call  $S$  a {\it monoidal (resp. quadratic)  Shannon extension } of $R$.
\end{definition}


In \cite{HLOST}, the authors call an infinite directed union of local quadratic transforms of a regular local ring $R$ a Shannon extension of $R$. Here we distinguish the family of quadratic Shannon extensions, from the larger family of rings of the monoidal Shannon extensions. In connection to the results proved in Section 2, we observe that for a quadratic Shannon extension, since for every $n$, $\m_nR_{n+1}= x_nR_{n+1}$ is principal by Proposition \ref{blowup}(2), then the maximal ideal $$ \m_S = \bigcup_{n \in \N} \m_n = \bigcup_{n \in \N} x_nS $$ is a directed union of principal ideals. Hence quadratic Shannon extensions are SBID by Theorem \ref{directSBID}.

We also recall that, fixed a valuation overring $V$ of $R$, there exists a unique sequence of iterated local quadratic transforms of $R$ such that  $R_n \subseteq V$ for every $n$ and this sequence is called the \it sequence of local quadratic transforms of $R$ along $V$\rm. Moreover, by \cite[Proposition 4]{Abh}, this sequence is finite if and only if $V$ is a prime divisor of $R$, that is a valuation overring birationally dominating $R$ such that $$ \trdeg \left( \frac{V}{\m_V}, \frac{R}{\m} \right) = \dim R - 1.$$

In \cite[Corollary 3.9 ]{HLOST} it is proved that a quadratic Shannon extension is Noetherian if and only if it is a DVR. We are going to prove that a monoidal Shannon extension of $R$ is Noetherian if and only if it is a regular local ring of dimension less than $\dim R$.

\begin{proposition}\label{mingen}
  Let $S$ be a monoidal Shannon extension of a regular local ring $R$ of dimension $d \geq 2$.
  If $\m_S$ is finitely generated, then it is minimally generated by at most $d - 1$ elements.
  Moreover, any minimal generating set of $\m_S$ is a regular sequence on $S$
  and part of a regular system of parameters of $R_n$ for $n \gg 0$.
\end{proposition}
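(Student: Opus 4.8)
The plan is to reduce the finite-generation hypothesis to a statement inside a single $R_N$, and then read everything off from the regular local structure of the $R_n$. Throughout I will use that each inclusion $R_n \hookrightarrow R_{n+1}$ is a local homomorphism, so that $\m_n = \m_S \cap R_n$ for every $n$ (a prime of $R_n$ containing the maximal ideal $\m_n$ and not containing $1$ must be $\m_n$). Consequently, if $\m_S$ is finitely generated, I would pick $N$ so large that a finite generating set lies in $R_N$; each such generator then lies in $\m_S \cap R_N = \m_N$, which forces
$\m_S = \m_N S$.

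To obtain the bound $\mu(\m_S) \le d-1$, I would compute $\m_N R_{N+1}$ explicitly. Write $\p_N = (z_1,\dots,z_h)R_N$ with $x_N = z_1$ and $h = \hgt \p_N \ge 2$, and extend $z_1,\dots,z_h$ to a regular system of parameters $z_1,\dots,z_d$ of $R_N$, so that $\m_N = (z_1,\dots,z_d)R_N$. By Proposition~\ref{blowup}(2) we have $\p_N R_{N+1} = x_N R_{N+1}$, so $z_1,\dots,z_h \in x_N R_{N+1}$ and therefore $\m_N R_{N+1} = (x_N, z_{h+1},\dots,z_d)R_{N+1}$, an ideal generated by $d-h+1 \le d-1$ elements. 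Since $\m_S = \m_N S = (\m_N R_{N+1})S$, these same $d-h+1$ elements generate $\m_S$, and hence $\m_S$ is minimally generated by at most $d-1$ elements. The hypothesis $\hgt \p_N \ge 2$ is exactly what produces the strict drop below $d$.

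For the second assertion, let $y_1,\dots,y_t$ be a minimal generating set of $\m_S$ and take $N$ with $y_1,\dots,y_t \in R_N$. Since $\m_S$ is finitely generated, Nakayama's lemma gives that the images of the $y_i$ form a $k_S$-basis of $\m_S/\m_S^2$, where $k_S = S/\m_S$. I claim their images in $\m_n/\m_n^2$ are linearly independent over $k_n = R_n/\m_n$ for every $n \ge N$: a nontrivial relation $\sum c_i y_i \in \m_n^2$ with some $c_i \notin \m_n$ (hence a unit of $R_n$) would, after dividing by that unit, express one $y_i$ modulo $\m_S^2$ as an $S$-linear combination of the others, using $\m_n^2 \subseteq \m_S^2$ and $R_n \subseteq S$; this contradicts linear independence in $\m_S/\m_S^2$. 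Linear independence of the images in $\m_n/\m_n^2$ is precisely the statement that $y_1,\dots,y_t$ is part of a regular system of parameters of the regular local ring $R_n$ (in particular $t \le d$, consistent with the bound above). Being part of a regular system of parameters in the regular, hence Cohen--Macaulay, ring $R_n$, the sequence $y_1,\dots,y_t$ is $R_n$-regular. To promote this to $S$, I would argue that any relation $y_{i+1}s \in (y_1,\dots,y_i)S$ with $s \in S$ already holds in some $R_n \supseteq \{y_1,\dots,y_t,s\}$, whereupon regularity in $R_n$ gives $s \in (y_1,\dots,y_i)R_n \subseteq (y_1,\dots,y_i)S$; thus $y_1,\dots,y_t$ is a regular sequence on $S$.

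The heart of the argument is the reduction $\m_S = \m_N S$ together with the computation of $\m_N R_{N+1}$ via $\p_N R_{N+1} = x_N R_{N+1}$. The one genuinely delicate point is the claim that a minimal generating set of $\m_S$ remains linearly independent modulo $\m_n^2$ in every $R_n$; this is what upgrades ``generators of $\m_S$'' to ``part of a regular system of parameters of $R_n$'' and lets the regular-sequence property descend from each $R_n$ to $S$.
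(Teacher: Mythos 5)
Your proof is correct, and its two halves relate to the paper's proof differently. The second half --- deducing from Nakayama that the images of a minimal generating set are linearly independent in $\m_S/\m_S^2$, transferring this to linear independence in $\m_n/\m_n^2$ over $R_n/\m_n$ (via $\m_n^2 \subseteq \m_S^2$ and a unit coefficient), concluding that the generators are part of a regular system of parameters of $R_n$ for all large $n$, and then descending the regular-sequence property from $R_n$ to $S$ by a direct-limit argument --- is essentially identical to the paper's argument. The first half is genuinely different. The paper gets the bound $t \le d-1$ indirectly: Zariski's Main Theorem gives $\hgt (\m_n R_{n+1}) < d$, hence $\m_n R_{n+1} \subsetneq \m_{n+1}$, and if $t = d$ the minimal generators would be a full regular system of parameters of $R_n$ for all $n \gg 0$, forcing $\m_{n+1} = \m_n R_{n+1}$, a contradiction. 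You instead use finite generation to write $\m_S = \m_N S$ and compute $\m_N R_{N+1}$ explicitly from Proposition~\ref{blowup}(2): since $\p_N R_{N+1} = x_N R_{N+1}$, the $h = \hgt \p_N \ge 2$ generators of $\p_N$ collapse to the single element $x_N$, so $\m_S = (x_N, z_{h+1}, \ldots, z_d)S$ is generated by $d-h+1 \le d-1$ elements. This buys two things: it avoids the appeal to Zariski's Main Theorem (an EGA-level input) entirely, and it yields the sharper, more explicit bound $d - \hgt\p_N + 1$; the paper's route, on the other hand, does not need to identify $\m_S$ with $\m_N S$ before knowing the generators lie in some $R_N$, and its height inequality $\hgt(\m_n R_{n+1}) < d$ is of independent use.

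One small caution on your write-up. You take $z_1 = x_N$ as part of a minimal generating set of $\p_N$ and extend it to a regular system of parameters of $R_N$. Under the paper's literal Definition~\ref{defmontrasf} one only knows $x_N \in \p_N \setminus \p_N^2$, which does not force $x_N \notin \m_N\p_N$ (in $k[[u,v,w]]$ with $\p = (u,v)$, the element $u^2+vw$ lies in $\p \setminus \p^2$ but also in $\m\p$), so $x_N$ need not extend to a minimal generating set of $\p_N$; under the regular-parameter convention of the paper's abstract (and of Shannon), it does. In any case this is a phrasing issue rather than a gap, because your computation never needs $z_1 = x_N$: Proposition~\ref{blowup}(2) gives $\p_N R_{N+1} = x_N R_{N+1}$ for \emph{all} of $\p_N$, so writing $\m_N = \p_N + (z_{h+1},\ldots,z_d)R_N$ with $z_{h+1},\ldots,z_d$ lifting a regular system of parameters of the regular local ring $R_N/\p_N$ already yields $\m_N R_{N+1} = (x_N, z_{h+1},\ldots,z_d)R_{N+1}$, which is all you use.
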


\begin{proof}
  Take a minimal generating set for the maximal ideal $\m_S$ of $S$, say $\m_S = (x_1, \ldots, x_t)$,
  and take $n \ge 0$ such that $x_1, \ldots, x_t \in R_n$.
  The inclusion $\m_n \subseteq \m_S$ induces a map $$ \frac{\m_n}{\m_n^2} \rightarrow \frac{\m_S}{\m_n \m_S} = \frac{\m_S}{\m_S^2}.$$
  Let $\overline{x_1}, \ldots, \overline{x_t}$ denote the images of $x_1, \ldots, x_t$ in $\frac{\m_n}{\m_n^2}$.
  By Nakayama's Lemma, the images of $\overline{x_1}, \ldots, \overline{x_t}$ in $\frac{\m_S}{\m_S^2}$ are linearly independent over $\frac{S}{\m_S}$,
  hence $\overline{x_1}, \ldots, \overline{x_t}$ are linearly independent over the subfield $\frac{R_n}{\m_n}$.
  We conclude that $x_1, \ldots, x_t$ are part of a regular system of parameters for $R_n$,
  and in particular $t \le d$.
  
  Moreover, $x_1, \ldots, x_t$ is a regular sequence in $S$.
  To see this, let $0 \le k < t$ and let $a x_{k+1} \in (x_1, \ldots, x_k) S$ for some $a \in S$.
  Write $a x_{k+1} = \sum_{i=1}^{k} c_i x_i$ for some $c_1, \ldots, c_k \in S$.
  Take $n \ge 0$ such that $x_1, \ldots, x_{k+1}, c_1, \ldots, c_k, a \in R_n$,
  so that $a x_{k+1} \in (x_1, \ldots, x_k) R_n$.
  Since $x_1, \ldots, x_{k+1}$ are part of a regular system of parameters in $R_n$ and hence a regular sequence on $R_n$,
  it follows that $a \in (x_1, \ldots, x_k) R_n \subseteq (x_1, \ldots, x_k) S$.
  We conclude that $x_1, \ldots, x_t$ is a regular sequence in $S$.
  
  Finally, since $R_{n+1}$ is a localization of a finitely generated $R_n$-algebra and $ \m_n \subseteq \m_{n+1} $, Zariski's Main Theorem \cite[Theorem 4.4.7]{gro} implies that $\hgt \m_n R_{n+1} < d$ for all $n \ge 0$. 
  Hence, since $\dim R_n = d$ for all $n \ge 0$, 
  it follows that $\m_n R_{n+1} \subsetneq \m_{n+1}$ for all $n \ge 0$.
  If $t = d$, then $\m_n = (x_1, \ldots, x_t) R_n$ for $n \gg 0$, contradicting this fact. We conclude that $t \le d - 1$.
\end{proof}

\begin{corollary}\label{noeth}
  Let $S$ be a monoidal Shannon extension of a regular local ring $R$.
  If $S$ is Noetherian, then $S$ is a regular local ring of dimension at most $\dim R - 1$.
\end{corollary}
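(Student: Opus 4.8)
The plan is to read off the conclusion from Proposition~\ref{mingen} together with three standard inequalities for Noetherian local rings. First I would observe that if $S$ is Noetherian then its maximal ideal $\m_S$ is finitely generated, so Proposition~\ref{mingen} applies directly: there is a minimal generating set $x_1, \ldots, x_t$ of $\m_S$ with $t \le d - 1 = \dim R - 1$, and moreover $x_1, \ldots, x_t$ is a regular sequence on $S$. The key point is that this single sequence serves two roles simultaneously — it minimally generates $\m_S$ and it witnesses a lower bound on depth — so all the hard work is already contained in that proposition.

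Next I would chain together three facts about the Noetherian local ring $(S, \m_S)$. Since $\m_S$ is generated by $t$ elements, Krull's height theorem gives $\dim S \le t$. Since $x_1, \ldots, x_t \in \m_S$ is a regular sequence, $\operatorname{depth} S \ge t$. And for any Noetherian local ring one always has $\operatorname{depth} S \le \dim S$. Combining these yields
$$ t \le \operatorname{depth} S \le \dim S \le t, $$
so that every inequality is an equality and in particular $\dim S = t$.

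Finally I would deduce regularity. The embedding dimension $\emdim S = \dim_{S/\m_S} \m_S/\m_S^2$ equals the minimal number of generators of $\m_S$, namely $t$, which we have just shown equals $\dim S$. A Noetherian local ring whose embedding dimension equals its Krull dimension is regular, so $S$ is a regular local ring; and its dimension is $t \le d - 1 = \dim R - 1$, as claimed.

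I do not expect a genuine obstacle here, since the substance is front-loaded into Proposition~\ref{mingen}; the corollary is essentially an assembly step. The only point requiring a moment of care is to confirm that the regular sequence produced by the proposition actually lies in $\m_S$ so that it can bound the depth — but this is immediate, as the elements minimally generate $\m_S$.
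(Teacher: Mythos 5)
Your proposal is correct and takes essentially the same approach as the paper: both feed the Noetherian hypothesis into Proposition~\ref{mingen} to obtain a regular sequence of length $t \le \dim R - 1$ minimally generating $\m_S$, and then deduce regularity and the dimension bound. The only cosmetic difference is that the paper directly cites the standard fact that a Noetherian local ring whose maximal ideal is generated by a regular sequence is regular (together with Krull's Altitude Theorem), whereas you unpack that same fact through the chain $t \le \operatorname{depth} S \le \dim S \le t$.
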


\begin{proof}
  Since $\m_S$ is finitely generated, Proposition~\ref{mingen} implies that
  $\m_S$ is minimally generated by a regular sequence on $S$, so $S$ is a regular local ring.
  Since $\m_S$ is minimally generated by at most $\dim R - 1$ elements,
  Krull's Altitude Theorem implies that $\dim S \le \dim R - 1$.
\end{proof}

\begin{example}\label{RLR}
  It is possible to construct a Noetherian monoidal Shannon extension of a regular local ring $(R, \m)$ under the assumption of the existence of a DVR $V$ that birationally dominates $R$ but it is not a prime divisor over $R$. 
  For instance, take $R=k[x,y]_{(x,y)}$ where $y \in xk[[x]]$ is a formal power series in $x$ and $y,x$ are algebraically independent over the base field $k$. Then, the DVR $V=k[[x]] \cap k(x,y)$ has residue field $k$ and hence is not a prime divisor over $R$.
  Consider the infinite sequence $(R_n, \m_n)$ of local quadratic transform of $R$ along $V$. By \cite[Proposition 3.4]{GHOT}, $V=\bigcup_{n \in \N}R_n$.
  Take some indeterminates $z_1, \ldots, z_t$ over the quotient field of $R$ and consider the sequence of rings $$ R_n[z_1, \ldots, z_t]_{(\m_n + (z_1, \ldots, z_t))}.   $$ Each ring is a local monoidal transform of the previous ring obtained by setting $\p_n= \m_n$ for every $n$. The directed union of this sequence is the regular local ring $V[z_1, \ldots, z_t]_{(x, z_1, \ldots, z_t)}$.
\end{example}


Now we give a first example of a non-Noetherian monoidal Shannon extension of a $3$-dimensional regular local ring which is neither a quadratic Shannon extension nor a valuation domain. We discuss some properties of this ring. At the end of this article, in Theorem \ref{monoidal} we are going to prove that this ring is a GCD domain showing that monoidal Shannon extensions can be GCD domains without being valuation domains, while this is impossible for quadratic Shannon extensions by \cite[Theorem 6.2]{GHOT}.
  
 \begin{construction} \label{construction}
{\em   Let $(R,\m)$ be a 3-dimensional regular local ring with maximal ideal  $\m = (x, y, z)R$. 
  Define  $$R_1 = R \left[ \frac{y}{x} \right]_{(x, \frac{y}{x}, z)R[ \frac{y}{x}]},$$  and 
  $$R_2 = R_1 \left[ \frac{y}{xz} \right]_{(x, \frac{y}{xz}, z)R_1[ \frac{y}{xz}]}.$$  
  
  Thus $R_1$ is the local monoidal transform of $R$ obtained by blowing up the prime ideal $(x, y)R$, dividing by $x$,  and localizing at the maximal ideal generated by $x, \frac{y}{x}$ and $z$;  and $R_2$ is the local monoidal transform of $R_1$ obtained by blowing up the prime ideal $(\frac{y}{x}, z)R_1$, dividing by $z$,  and localizing at the maximal ideal generated by $x, \frac{y}{xz}$ and $z$.
  
  Define $R_{2n+1}$ and $R_{2n+2}$ inductively so that $R_{2n+1}$   
  is the local 
  monoidal transform of $R_{2n} $ obtained by blowing up the prime ideal
   $(x, \frac{y}{x^nz^n}))R_{2n}$, dividing by $x$,  and 
  localizing at the maximal ideal generated by $x, \frac{y}{x^{n+1}z^n}$ and $z$;  and $R_{2n+2}$ 
  is the local 
   monoidal transform of $R_{2n+1}$ obtained by blowing up the prime ideal 
    $(\frac{y}{x^{n+1}z^n}, z)R_{2n+1}$,
     dividing by $z$,  and 
  localizing at the maximal ideal generated by
   $x, \frac{y}{x^{n+1}z^{n+1}}$ and $z$.  
   Call $S = \bigcup_{n \in \N} R_n$. }
   \end{construction}

 We record  properties of $S$ and of the sequence $\{R_n\}$  in Theorem~\ref{specialcase}. It will be easy to observe that the description of this ring can be generalized in higher dimension following a similar pattern. Among other things, we prove that this ring in Construction \ref{construction} is not Noetherian and it admits a unique minimal Noetherian overring. 
 
 In \cite{HLOST}, the authors proved that a quadratic Shannon $S$ extension that is not a DVR, always admits a unique minimal Noetherian overring and they call it the \textit{Noetherian hull} of $S$. In Theorem 4.1 of \cite{HLOST} are given different characterizations of the Noetherian hull $T$ of $S$. In particular is shown that $T = S [\frac{1}{x}]$ for $x \in \m_S$ such that $xS$ is $\m_S$-primary and that $T$ is a localization of $R_n$ for $n \gg 0$. In particular, $T$ is a Noetherian regular UFD.
 We also describe the complete integral closure of $S$, of which we recall the definition. 
 
 \begin{definition} \label{2.5}
 \rm Let $A$ be an integral domain. An element $x$ in the field of fractions of $A$ is called {\it almost integral} over $A$ if $A [x]$ is contained
  in a principal fractional ideal of $A$. 
  
     The set of all the almost integral elements over $A$ is called the \textit{complete integral closure} of $A$ and it is denoted by $A^{*}$. The ring $A$ is {\it completely integrally closed} if $A=A^*.$
\end{definition}
 
 The complete integral closure of a quadratic Shannon extension is well described in \cite[Section 6]{HLOST}.
 
 \medskip
 
 
 \begin{theorem} \label{specialcase}    Assume notation as in Construction~\ref{construction}.  Let $S = \bigcup_{n \in \N}R_n$, and let $\p = y \mathcal{V} \cap S$ where $\mathcal{V}= R_{yR}$.  
 Then:
 \begin{enumerate}    
 
 \item The maximal ideal of $S $ is $\m_S = (x, z)S$ and $$
 \p  = \bigcap_{n \in \N}x^nS = \bigcap_{n \in \N}z^nS$$ is a non finitely generated prime ideal of $S$.  
 
 \item The 
   principal ideals $xS$ and $zS$ are nonmaximal prime ideals of $S$ of height 2.
 \item 
 $\frac{S}{\p}$ is a 2-dimensional regular local ring that is isomorphic to $\frac{R}{yR}$. 
 This isomorphism  defines 1-to-1 correspondence of 
 the prime ideals of $S$  of height 2  containing $\p$    with the
 prime ideals of $R$ of height 2, containing $y$. 
  \item 
 The localizations $S_{xS}$ and $S_{zS}$ are rank 2 valuation domains, and the map 
  $\Spec S \to \Spec R$ is not biregular at these two prime ideals.
  
 \item 
 Let $\q$ be  a  prime ideal of $S$ of height 1. If $\q \ne \p$, then $\q$ is a principal ideal
 generated by a prime element $f \in R$ such that $f \not\in (x, y) R \cup (y,z)R$.  It follows that  
 $R_{fR} = S_\q$.


 
 \item
$T :=  S[\frac{1}{xz}] $ is  a 
 2-dimensional regular Noetherian UFD and it is 
 the complete integral closure of $S$ and the unique minimal Noetherian overring of $S$.

  \item
  $S = T \cap V_1 \cap V_2$,  where $V_1 = S_{xS}$ and $V_2 = S_{zS}$ are  the valuation rings of item~2.
  
  
 \end{enumerate}  

 \end{theorem}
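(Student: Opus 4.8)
The plan is to extract from Construction~\ref{construction} the explicit ``$y$-fractions'' $t_n$ given by $t_{2n}=y/(x^nz^n)$ and $t_{2n+1}=y/(x^{n+1}z^n)$, so that $\m_n=(x,t_n,z)R_n$ and $y=x^{a_n}z^{b_n}t_n$ with $a_n,b_n\to\infty$ and $t_m=x^{a_{m'}-a_m}z^{b_{m'}-b_m}t_{m'}$ for $m'>m$. Two facts then drop out at once: since $t_n=x\,t_{n+1}$ or $z\,t_{n+1}$ we get $\m_S=(x,z)S$, and since $y\in x^nS\cap z^nS$ for all $n$ we get $y\in\bigcap_nx^nS\cap\bigcap_nz^nS$. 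For the ideal $\p=y\mathcal{V}\cap S$ I would first show $S_\p=\mathcal{V}$: because $\p\cap R=y\mathcal{V}\cap R=yR$ one has $R\setminus yR\subseteq S\setminus\p$, hence $R_{yR}\subseteq S_\p\subseteq\mathcal{V}=R_{yR}$, so $S_\p=\mathcal{V}$ is a DVR and $\p$ has height one. The same computation identifies $\p\cap R_n$ with the height-one prime $t_nR_n$.

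Next I would pin down the geometry of $xS$ and $zS$. Each $t_m$ lies in $xS$ (it is eventually divisible by $x$ up the tower) while $z\notin xS$ (else $x\mid z$ in some UFD $R_m$), so $xS\cap R_n$ is a prime squeezed as $(x,t_n)R_n\subseteq xS\cap R_n\subsetneq\m_n$ and therefore equals the height-two prime $(x,t_n)R_n$; symmetrically $zS\cap R_n=(t_n,z)R_n$. Taking directed unions shows $xS,zS$ are primes, and the chains $0\subset\p\subset xS\subset\m_S$ with the contraction bound give them height two and nonmaximality, which is item~2. The isomorphism $S/\p\cong R/yR$ of item~3 follows by checking that each $R/yR\to R_n/t_nR_n$ is an isomorphism (all the new generators $t_1,\dots,t_n$ die modulo $t_n$), so $S/\p=\bigcup_nR_n/t_nR_n=R/yR$, and the prime correspondence is immediate. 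I can then finish item~1: $\bigcap_nx^nS$ is a prime contained in $xS$ but not containing $x$, hence of height $\le 1$, and it contains the height-one prime $\p$ (any $f\in\p$ equals $t_mg$ in some $R_m$ and $t_m\in\bigcap_nx^nS$), so $\bigcap_nx^nS=\p=\bigcap_nz^nS$; non-finite-generation comes from $\p=x\p$ and Nakayama.

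The \textbf{main obstacle} is item~4, that $S_{xS}$ and $S_{zS}$ are rank-two valuation rings. Here I would use the identification $S_{xS}=\bigcup_n(R_n)_{(x,t_n)R_n}$ and study the transition maps of the two-dimensional regular local rings $D_n:=(R_n)_{(x,t_n)R_n}$. The crucial computation is that the ``$z$-steps'' are trivial, $D_{2n+1}=D_{2n+2}$, since $z$ is a unit in $D_{2n+1}$ so dividing $t_{2n+1}$ by $z$ changes nothing, while each ``$x$-step'' $D_{2n}\to D_{2n+1}$ is a genuine local quadratic transform (blow up the maximal ideal $(x,t_{2n})$ and divide by $x$). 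Thus $S_{xS}$ is an infinite directed union of local quadratic transforms of the two-dimensional regular local ring $R_{(x,y)R}$, hence a valuation ring by the dimension-two result \cite[Lemma~12]{Abh}; it has rank two because $S_{xS}\subsetneq S_\p=\mathcal{V}$ and $\hgt xS=2$. The symmetric argument handles $S_{zS}$, and biregularity fails at both primes since $R_{(x,y)R}$ and $R_{(y,z)R}$ are two-dimensional regular local rings, not valuation rings.

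The remaining items are ideal-theoretic bookkeeping. For item~5, a height-one prime $\q\ne\p$ cannot lie in $xS$ or $zS$ (whose only height-one subprime is $\p$), so $x,z\notin\q$; applying biregularity of $\Spec R_n\to\Spec R$ at height-one primes off the exceptional divisor (Proposition~\ref{blowup}(5)) through the tower gives $S_\q=R_{\q\cap R}$ with $\q\cap R=fR$ principal, and tracing the blow-up centers shows $f\notin(x,y)R\cup(y,z)R$. For item~6, $T=S[\tfrac1{xz}]=\bigcup_nR_n[\tfrac1{xz}]$ is a localization of the Noetherian regular UFD $R[\tfrac1{xz}]$, hence itself a Noetherian regular UFD of dimension two; it is the minimal Noetherian overring because in any Noetherian overring $S'$ Krull's intersection theorem forces $\bigcap_nx^nS'=0$, so $\p\ne0$ makes $x$ (and likewise $z$) a unit, whence $S'\supseteq T$; and it is the complete integral closure because $1/x,1/z$ are almost integral over $S$ (from $y\in\bigcap_nx^nS\cap\bigcap_nz^nS$ one gets $S[1/x]\subseteq\tfrac1yS$, and similarly for $z$), while the Noetherian normal ring $T$ is completely integrally closed, so $S^*=T$ by monotonicity. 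Finally, for item~7, given $f\in T\cap V_1\cap V_2$ I would pick $m$ large with $f\in R_m[\tfrac1{xz}]$, $f\in(R_m)_{(x,t_m)R_m}$ and $f\in(R_m)_{(t_m,z)R_m}$; writing $f$ in lowest terms in the UFD $R_m$, membership in the first localization clears $x$ from the denominator and membership in the second clears $z$, so $f\in R_m\subseteq S$, giving $S=T\cap V_1\cap V_2$.
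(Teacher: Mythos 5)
Your proposal is correct in substance, and on the crux of the theorem it takes a genuinely different route from the paper. The paper handles items 2 and 4 in one stroke: from $(S_{xS})_{\p}=S_{\p}=\mathcal{V}$ it invokes Fontana's theorem \cite[Theorem 2.4]{Fon}, which simultaneously yields that $xS$, $zS$ have height $2$ and that $S_{xS}$, $S_{zS}$ are rank-$2$ valuation rings (a composite of the DVR $\mathcal{V}$ with the DVR obtained by localizing $S/\p$ at the image of $xS$). You instead prove the height statement by contracting chains through $xS\cap R_n=(x,\frac{y}{x^nz^n})R_n$, and prove the valuation statement by exhibiting $S_{xS}=\bigcup_n(R_n)_{xS\cap R_n}$ as an infinite directed union of local quadratic transforms of the $2$-dimensional regular local ring $R_{(x,y)R}$ and citing \cite[Lemma 12]{Abh}; your key computation that the $z$-steps collapse (because $z$ is a unit in $(R_{2n+1})_{(x,\frac{y}{x^{n+1}z^n})}$, so adjoining $\frac{y}{x^{n+1}z^{n+1}}$ changes nothing) is correct, and rank $2$ then follows from $\dim S_{xS}=\hgt xS=2$. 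Both arguments are valid; the paper's is shorter, while yours is more structural and in fact anticipates the paper's own later result (Proposition~\ref{chainprimeloc}) that localizations at chain-prime ideals are quadratic Shannon extensions of lower-dimensional regular local rings. Your treatment of items 1, 3, 6, 7 matches the paper's, and in two spots your details are cleaner than the paper's: non-finite generation of $\p$ via $\p=x\p$ plus Nakayama, and minimality of the Noetherian overring via Krull's intersection theorem rather than almost integrality plus \cite{Kap}.

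The one place you wave your hands, item 5 (``tracing the blow-up centers shows $f\notin(x,y)R\cup(y,z)R$''), is exactly where the paper waves its hands too, and you should be aware that this claim is genuinely problematic, not just unproved. Take $f=y+x^2+xz$, which is a regular parameter of $R$, hence prime, and lies in $(x,y)R$. Then $g:=\frac{f}{x}=x+z+\frac{y}{x}$ is a regular parameter of $R_1$; moreover $g\notin xS$ (else $z=g-x-\frac{y}{x}\in xS$, impossible) and symmetrically $g\notin zS$, so $g$ avoids every locus ideal $\p_n\subseteq xS\cup zS$, and Lemma~\ref{primeelements} makes $\q:=gS$ a height-one prime of $S$ with $\q\ne\p$. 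Biregularity (Proposition~\ref{blowup}(5)) gives $S_{\q}=(R_1)_{gR_1}=R_{fR}$ and hence $\q\cap R=fR$ with $f\in(x,y)R$, contradicting the asserted claim; worse, $\q$ has no generator in $R$ at all, since a generator would lie in $\q\cap R=fR=xgR$, forcing $g\in xgS$ and so $x$ to be a unit. So item 5 as literally stated fails; what the argument (yours and the paper's) actually proves, once the false claim is dropped, is that such a $\q$ is principal, generated by a prime element of $R_n$ for some $n$ lying outside $xS\cup zS$, with $S_{\q}=(R_n)_{\q\cap R_n}$, and that $\q=fS$ with $f\in R$ precisely when the generator $f$ of $\q\cap R$ avoids $(x,y)R\cup(y,z)R$. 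Since your proof defers to the same unproved claim as the paper, it inherits this gap; everything else in your write-up stands.
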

 
 \begin{proof}
 1) By definition $\frac{y}{x^iz^j} \in \p$ for all $i, j \in \N$.  
 Moreover, all the elements of this form are necessary to generate $\p$. Thus $\p$ is non finitely generated.
 It is also clear that $\p = \bigcap_{n \in \N}x^nS$ and $\p = \bigcap_{n \in \N}z^nS$. Hence $\m_n \subseteq (x,z)S$ for  every $n \in \N $ and this implies that $\m_S = (x,z)S.$

\noindent 2) Observe that, since $x$ and $z$ are prime elements in $R_n$ for every $n \in \N$, then they are prime elements in $S$. Moreover:
 $$
  (S_{xS})_{\p} = (S_{zS})_{\p}  =  S_{\p} = \mathcal V.
 $$
 
 By \cite[Theorem~2.4]{Fon},  it follows that $xS$ and $zS$ are prime ideals of height $2$ and $S_{xS}$  and  $S_{zS}$ are rank 2 valuation domains. The DVR $\mathcal V$ is the  rank 1 valuation overring of both  $S_{xS}$ and  $S_{zS}$.

\noindent    3) Notice that
   \begin{equation*}  \label{eq4}
   \frac{R}{yR} ~ =   \frac{R_1}{(\p \cap R_1)}  = \cdots  = ~\frac{R_n}{(\p \cap R_n)} ~ = 
   \cdots = ~\frac{S}{\p}. 
  \end{equation*}
 Therefore $\frac{S}{\p}$ is a 2-dimensional regular local ring that is isomorphic to $\frac{R}{yR}$.

 \noindent 4)  Since $xS \cap R = (x, y)R$ and $zS \cap R = (y, z)R$ and 
  $R_{(x,y)R}$  and $R_{(y,z)R}$ are  2-dimensional
 regular local rings,  the map $\Spec S \to \Spec R$ is not biregular at $xS$ and $zS$. 

 \noindent 5) Let $\q$ be a height 1 prime ideal of $S$ with $\q \ne \p$. 
 Then $\q \nsubseteq xS \cup zS$ 
 and $xS, zS \nsubseteq \q $. 
 By repeated applications of Proposition~\ref{blowup}(5),
 the map  $\Spec S \to\Spec R$ is biregular at $\q$, that is $S_\q = R_{\q \cap R}$. 
 Since $\dim S_\q = 1$, $\q \cap R = fR$, where $f$ is a prime element of $R$
 with $f \notin (x, y)R \cup (y, z)R$. This implies that $f$ is a prime element in $R_n$ for every $n$ and thus 
  $\q = fS$.  
 
 
\noindent 6)   Iterating Proposition~\ref{blowup}(4), we get that $S[\frac{1}{xz}]= R[\frac{1}{xz}]$.  Hence  $S[\frac{1}{xz}]$  is a 
  regular Noetherian UFD. Since $\frac{R}{yR} = \frac{S}{\p}$, we have $\dim S[\frac{1}{xz}] = 2$.
   Since  $\frac{y}{x^iz^j} \in S$ for all $i, j \in \N$,  $S[\frac{1}{xz}]$ is  almost integral over $S$. 
   It follows that $S^* = S[\frac{1}{xz}]$.
    
 If $A$ is a Noetherian overring of $S$, then $\frac{1}{xz}$ is almost integral and therefore integral over 
  $A$.  Since $ xz \in S \subseteq A$,  it follows that $\frac{1}{xz} \in A$, cf. \cite[page~10, Theorem~15]{Kap}.  We conclude that  $S[\frac{1}{xz}] = T$ 
  is  the Noetherian hull of $S$. 
  
\noindent  7)  Let $$\frac{a}{x^iz^j} \in S \left[ \frac{1}{xz} \right] \cap S_{xS} \cap S_{zS},$$  where $a \in S$ and 
   $i \ge 0$ and $j\ge 0$ are minimal for such a representation.  Then since $a \in S \setminus (xS \cup zS)$ 
   and $xS$ and $zS$ are distinct nonzero principal prime ideals, we get $i = 0 = j$. Therefore $S = T \cap V_1 \cap V_2$.
   \end{proof}


\medskip
 
 The description of property 7 of Theorem \ref{specialcase} is motivated by Theorem 5.4 of \cite{HLOST}, in which it is proved a quadratic Shannon extension is the intersection of its Boundary valuation ring and of its Noetherian hull.  
 
   We recall the definition of Boundary valuation ring of a quadratic Shannon extension:  
   
   \begin{definition} \cite[Corollary 5.3 ]{HLOST} \label{boundary}
  Let $S  = \bigcup_{i \ge 0}R_i$ be a  quadratic Shannon extension of $R = R_0$ and for each $i$, let $V_i$ be the  DVR  
defined by  the {\it order function}  $\ord_{R_i}$,  where for $x \in R_i$, 
$$ \ord_{R_i}(x) = \sup \{ n \mid x \in \m_i^n \} $$ and $\ord_{R_i}$ is extended to the quotient field of $R_i$ by defining $\ord_{R_i}(\frac{x}{y}) = \ord_{R_i}(x) -\ord_{R_i}(y)$ for all $x,y \in R_i$ with $y \ne 0$. 
The family $\{V_i\}_{i=0}^\infty$ determines 
the set  
\begin{equation*}\label{equation V}
	V ~   =   ~    \bigcup_{n \ge 0}~ \bigcap_{i \ge n} V_i = \{ a \in F ~ | ~  \ord_{R_i }(a) \ge 0 \text{ for } i \gg 0 \}.\end{equation*}

The set $V$  consists of the elements in $F$ that are in all but finitely many of the  $V_i$.
In \cite[Corollary 5.3]{HLOST},  is proved that $V$ is a valuation domain that birationally
dominates $S$,  and $V$ is called the {\it Boundary valuation ring} of the Shannon extension $S$.
  \end{definition}
  
  Hence, the rings $V_1 = S_{xS}$ and $V_2 = S_{zS}$ are playing for the monoidal Shannon extension in Construction \ref{construction}, the role that is played by the boundary valuation ring $V$ in the case where $S$ is a quadratic Shannon extension.
   
   If instead we consider the limit point $V$ of the order valuation rings $\{V_n\}$ of the sequence $\{R_n\}$ given in Construction \ref{construction}, we do not find the equality $ S= T \cap V $. To see this, we can take for instance the element $\frac{x}{z} $ which is in $T$ and in $V_n$ for all $n$ but it is not in $S$.

 \begin{remark}     The monoidal Shannon extension $S$ obtained in Construction \ref{construction} is a pullback of the canonical homomorphism 
     $S \to S/\p$ with respect to the
     canonical injection $S \hookrightarrow  T = S[\frac{1}{xz}]$ as in the following diagram  
     
     \begin{center}
\begin{tikzcd}
   S \arrow[rightarrow]{r}\arrow[hookrightarrow]{d} 
  &  \dfrac{S}{\p}\arrow[hookrightarrow]{d}
  \\ 
   T\arrow{r}
 &  \dfrac{T}{\p}
\end{tikzcd}
\end{center}         
In the terminology of Houston and Taylor in \cite{Houston-Taylor}, $S$ is a pullback diagram of type $\square$. This differs from a pullback of type $\square^{*}$ in the fact that the integral domain $T/\p$ in the lower right of the diagram is not a field. Another proof that this specific ring $S$ is a GCD domain can be derived using transfer properties in pullback diagrams \cite[Theorem 5.11]{Houston-Taylor}. 
\end{remark}

\medskip

\section{Chain-prime ideals of monoidal Shannon extensions.}

The example obtained in Construction \ref{construction} can be seen as a prototype to study the structure of a class of monoidal Shannon extensions. In that particular sequence of local monoidal transforms we noticed that the prime ideals $\p_n$ form two different chains, which are $$ \p_0 \subseteq \p_2 \subseteq \p_4 \subseteq \dots \subseteq \bigcup_{k \geq 0}^{\infty} \p_{2k}= xS $$ and $$ \p_1 \subseteq \p_3 \subseteq \p_5 \subseteq \dots \subseteq \bigcup_{k \geq 0}^{\infty} \p_{2k+1}= zS. $$
The directed union of each chain is a prime ideal of the union ring $S$. This is a general fact for a monoidal Shannon extension.

The terminology of the following definition is inspired by the concept of "fundamental locus" of a birational transformation used by Zariski in \cite{ZarF}.

\begin{definition} 
\label{locus}
We call the prime ideals $\p_n$, the \textit{locus ideals} of the sequence $\{(R_n, \m_n) \}_{n \in \N}$. 
\end{definition}

\begin{definition} 
\label{chain}
Let $\{\p_{n_i} \}_{i \in \N}$ a family of locus ideals of the sequence $\{R_n \}_{n \in \N}$. The family $\{\p_{n_i} \}_{i \in \N}$ is a \textit{chain} if $ \p_{n_i} \subseteq \p_{n_{i+1}} $ for every $ i $. 
\end{definition}

We recall, that by Proposition \ref{blowup}(2), $ \p_nS= x_nS $ for some $x_n \in \p_n$.

\begin{proposition} \label{singlechain}  
Let $\{\p_{n_i} \}_{i \in \N}$ be an infinite chain of locus ideals of the sequence $\{R_n \}_{n \in \N}$ and for every $n$, take $x_n \in \p_n$ such that $ \p_nS= x_nS $. Denote $Q= \bigcup_{i \in \N} \p_{n_i}$. Then:
\begin{enumerate}
\item
 $Q$ is a prime ideal of $S$ and it is maximal if and only if $Q \cap R_{n_i}= \m_{n_i} $, for infinitely many $i$.
 \item $Q= \bigcup_{i \in \N} x_{n_i}S$ is the directed union of an ascending chain of principal ideals of $S$.
 \item
 $Q$ is either principal or it is not finitely generated. 
 \end{enumerate}
\end{proposition}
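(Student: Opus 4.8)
The plan is to observe that all three items rest on the single structural fact that $Q$ is a directed union of principal ideals, so I would prove item (2) first and read off (3) from it. That $Q = \bigcup_i \p_{n_i}$ is an ideal of $S$ is the one place where the chain hypothesis $\p_{n_i} \subseteq \p_{n_{i+1}}$ is genuinely used: given $q \in \p_{n_i}$ and $s \in S$, I write $s \in R_m$ for some $m$ and choose $j \ge i$ with $n_j \ge m$, so that $q \in \p_{n_i} \subseteq \p_{n_j}$ and $s \in R_m \subseteq R_{n_j}$, whence $qs \in \p_{n_j} \subseteq Q$; closure under sums is the same comparison of two indices. Knowing $Q$ is an ideal, item (2) is immediate: from $x_{n_i} \in \p_{n_i} \subseteq Q$ we get $x_{n_i}S \subseteq Q$, while $\p_{n_i} \subseteq \p_{n_i}S = x_{n_i}S$ by Proposition~\ref{blowup}(2) gives $Q \subseteq \bigcup_i x_{n_i}S$; and $x_{n_i} \in \p_{n_i} \subseteq \p_{n_{i+1}} \subseteq x_{n_{i+1}}S$ shows the chain $\{x_{n_i}S\}$ is ascending. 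Item (3) then follows exactly as in Theorem~\ref{uniongcd}: a finite generating set of $Q$ lies in a single $x_{n_i}S$, forcing $Q = x_{n_i}S$ and hence principal.

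It remains to treat item (1). Primality I would check directly: if $ab \in Q$, say $ab \in \p_{n_i}$ with $a, b \in S$, then choosing $j \ge i$ with $a, b \in R_{n_j}$ places $ab$ in the prime ideal $\p_{n_j}$ of $R_{n_j}$, so one of $a, b$ lies in $\p_{n_j} \subseteq Q$; and $Q$ is proper because $1 \notin \p_{n_i}$ for any $i$. For the maximality statement I would use that $S$ is local with maximal ideal $\m_S = \bigcup_n \m_n$, so $Q$ is maximal if and only if $Q = \m_S$, and then reduce the claim to the equivalence $Q = \m_S \iff Q \cap R_{n_i} = \m_{n_i}$ for infinitely many $i$.

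The forward implication of that equivalence is formal: locality of each inclusion $R_{n_i} \hookrightarrow S$ gives $\m_S \cap R_{n_i} = \m_{n_i}$, so $Q = \m_S$ yields $Q \cap R_{n_i} = \m_{n_i}$ for every $i$. For the converse I would take $m \in \m_S$, locate it in some $\m_N$, and pick one of the infinitely many distinguished indices with $n_i \ge N$; then $m \in \m_N \subseteq \m_{n_i} = Q \cap R_{n_i} \subseteq Q$, so $\m_S \subseteq Q$ and $Q = \m_S$. I expect the main obstacle, such as it is, to be precisely this last bookkeeping point: the hypothesis must quantify over infinitely many (equivalently, cofinally many) indices rather than a single one, since recovering every element of $\m_S$ requires the equality $Q \cap R_{n_i} = \m_{n_i}$ to persist for arbitrarily large $n_i$, whereas a single index would only control the finite stage $R_{n_i}$.
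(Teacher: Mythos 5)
Your proof is correct and takes essentially the same route as the paper's: primality is obtained from the primality of $\p_{n_j}$ in $R_{n_j}$ after enlarging the index, the maximality criterion from cofinality of the distinguished indices inside the local ring $S$, and items (2)--(3) from writing $Q$ as the ascending union of the principal ideals $x_{n_i}S$. If anything you are more careful than the paper, which does not explicitly verify that $Q$ is an ideal of $S$, nor that $a,b$ may be assumed to lie in $R_{n_j}$ before invoking primality of $\p_{n_j}$.
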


\begin{proof}
 1) Take $ab \in Q= \bigcup_{i \in \N} \p_{n_i}$. Then there exists $n_i$ such that $ab \in \p_{n_i}$ and hence either $a$ or $b$ is in $ \p_{n_i} \subseteq Q $ and hence $Q$ is prime. 
 Assume $ Q \cap R_{n_i} = \m_{n_i}$ for infinitely many $i$. Since for every $n$, $ \m_n \subseteq \m_{n+1} $, we have $Q=\bigcup_{n \geq 0}^{\infty} \m_n= \m_S$. Conversely if $Q$ is maximal, $ Q \cap R_{n_i} = \m_{n_i}$ for every $i$.

 \noindent 2) Since $ \p_{n_i}S = x_{n_i}S $ for some $x_{n_i} \in \p_{n_i}$, we have that $Q= \bigcup_{i \in \N} \p_{n_i} = \bigcup_{i \in \N} x_{n_i}S $ is an ascending union of principal ideals. 
 
 \noindent 3) It follows easily from 2.
 \end{proof} 

\begin{definition} \label{chainprimeideal}
Let $\{\p_{n_i} \}_{i \in \N}$ an infinite chain of locus ideals of the sequence $\{R_n \}_{n \in \N}$.  We call the prime ideal $Q= \bigcup_{i \in \N} \p_{n_i}$ a \it{chain-prime ideal}.
\end{definition}

By Theorem \ref{directSBID} a monoidal Shannon extension $S$ is a SBID if and only if its maximal ideal $\m_S$ is a chain-prime ideal.

The height of a chain-prime ideal of $S$ in general depends on the height of the locus ideal $\p_n$ in the rings $R_n.$ We can define different classes of monoidal Shannon extensions, depending on the height of the locus ideals in the following way:

\begin{definition} \label{MiR} Let $R$ be a regular local ring of dimension $d \geq 2$. For $2 \leq i \leq d$, we call $ \mathcal{M}_i(R) $ the set of the monoidal Shannon extensions of $R$ such that ht $ \p_n \geq i$ for every $n$. 
 \end{definition}
 
The most general setting for monoidal transform is when ht $\p_n= 2$ for every $n$. These transforms have been called by Shannon {\it elementary monoidal transforms}. Shannon proved that any monoidal transform can be factorized in elementary monoidal tranforms (see \cite[Remark 2.5]{Sha}). \\
  It is easy to extend this result in order to prove that there is an inclusion $$ \mathcal{M}_i(R) \subseteq \mathcal{M}_{i-1}(R). $$ With this notation, the set of quadratic Shannon extensions is $\mathcal{M}_d(R)$ while the set of the all possible monoidal Shannon extensions is $ \mathcal{M}_2(R) $.

\begin{proposition} \label{height} Let $R$ be a regular local ring of dimension $d \geq 2.$ 
Let $S \in \mathcal{M}_{d-i}(R)$ be a monoidal Shannon extension of $R$ and let $Q$ be a prime ideal of $S$ which contains infinitely many locus ideal $\p_n$. Then, $$ \hgt Q \geq \dim S - i. $$ 
\end{proposition}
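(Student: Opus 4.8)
The plan is to derive the bound from the two inequalities
\[
\dim \frac{S}{Q} \le i \qquad\text{and}\qquad \dim S \le \hgt Q + \dim \frac{S}{Q},
\]
since together they yield $\hgt Q \ge \dim S - \dim(S/Q) \ge \dim S - i$. Throughout I write $Q_n = Q \cap R_n$, so that $Q = \bigcup_n Q_n$, $S/Q = \bigcup_n R_n/Q_n$, and $S_Q = \bigcup_n (R_n)_{Q_n}$.

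First I would establish $\dim(S/Q) \le i$ by contracting to the rings $R_n$. By hypothesis $\p_n \subseteq Q$ for infinitely many $n$, and for each such $n$ we have $\p_n \subseteq Q_n$, whence $\hgt_{R_n} Q_n \ge \hgt_{R_n}\p_n \ge d-i$ because $S \in \mathcal{M}_{d-i}(R)$. Since each $R_n$ is a regular local ring, it is catenary and equidimensional, so $\dim(R_n/Q_n) = d - \hgt_{R_n} Q_n \le i$. A chain of primes in $S/Q$ contracts to a chain of primes in each $R_n/Q_n$ which becomes strict once $n$ is large enough to contain a witness for every strict inclusion; hence any chain in $S/Q$ has length at most $\dim(R_n/Q_n) \le i$ for such $n$, giving $\dim(S/Q)\le i$.

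The second inequality is the heart of the matter, and it is the reverse of the inequality $\dim S \ge \hgt Q + \dim(S/Q)$ that holds in any ring; I expect it to be the main obstacle. It cannot follow from formal considerations, because in a directed union of local rings the height of a prime may strictly drop and going-down may fail: already a quadratic sequence over a two-dimensional regular local ring has union a rank-one valuation ring, so the contraction to $R_n$ of its maximal ideal has height $2$ while $\m_S$ has height $1$. Thus one cannot simply lift a chain below $Q_n$ in $R_n$ to a chain below $Q$ in $S$, and the lower bound on $\hgt Q$ must be produced from the monoidal structure. The plan is to take a saturated chain of $S$ realizing $\dim S$ and reroute it through $Q$: by Proposition~\ref{blowup}(5) the map $\Spec S \to \Spec R_n$ is biregular at every prime not containing one of the height-one primes $x_m S$, equivalently not containing a locus ideal, so at such primes $S$ is locally a localization of the catenary equidimensional ring $R_n$ and heights are preserved. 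The only primes where height can drop are those containing some $x_m$, i.e.\ the chain-primes of Proposition~\ref{singlechain}, of which $Q$ is one. The crux is therefore to show that $S$ is equidimensional through $Q$ — that no chain of primes of $S$ bypasses $Q$ to become longer than a chain passing through it — by controlling the finitely many chain-primes below a given prime and combining the principal extensions $\p_n S = x_n S$ with the biregularity above to convert a maximal chain of $R_n$ below $Q_n$ into an honest chain of primes of $S$ below $Q$.
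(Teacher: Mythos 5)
Your first paragraph is, in substance, the paper's \emph{entire} proof of Proposition \ref{height}: the paper assumes a chain $Q \subsetneq Q_1 \subsetneq \cdots \subsetneq Q_i \subsetneq \m_S$ of length $i+1$, chooses $n$ large enough that $\p_n \subseteq Q \cap R_n$ and that all the contractions to $R_n$ stay strict, and derives a contradiction with $\hgt \p_n \ge d-i$ (using, implicitly, exactly what you make explicit: $R_n$ is a catenary regular local ring, so $\dim (R_n/\p_n) \le i$). In other words, what the paper actually establishes is the coheight bound $\dim (S/Q) \le i$, nothing more.

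The second inequality $\dim S \le \hgt Q + \dim(S/Q)$, which you correctly identify as non-formal and as the heart of the matter, is not addressed in the paper at all: its proof passes directly from the nonexistence of the chain above $Q$ to the conclusion $\hgt Q \ge \dim S - i$. So, judged as a proof of the height inequality as stated, your attempt has a genuine gap --- your last paragraph is a plan, not an argument, and as written it is doubtful (for instance, the ``finitely many chain-primes below a given prime'' is unjustified; finiteness of chains is only hypothesized later, in Section \ref{ultima}) --- but it is precisely the gap that the paper's own proof leaves open, and your rank-one valuation example showing that heights can drop under contraction to $R_n$ is an apt witness that the paper's silent passage from coheight to height needs justification. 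Two remarks that may help you settle the matter: (i) every later use of Proposition \ref{height} (Theorem \ref{valuations}, Corollary \ref{corprinc1}, Theorem \ref{carprinc}(1)) needs only the coheight bound --- in Theorem \ref{valuations} because the primes of a valuation ring are linearly ordered, so $\dim V = \hgt Q + \dim(V/Q)$ holds there for free, and in the other two because one only needs that no prime lies strictly between the relevant chain-prime ideal and $\m_S$; (ii) consequently, the cleanest repair, both of your write-up and of the paper's, is to state the proposition as ``there is no chain of $i+1$ strict inclusions of primes above $Q$,'' which is exactly what both arguments prove and all that is ever used.
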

\begin{proof}
 Assume there is an ascending chain of prime ideals of $S$, $$ Q \subsetneq Q_1 \subsetneq \ldots \subsetneq Q_i \subsetneq \m_S, $$ of lenght $i+1$ between $Q$ and $\m_S$. Since all the inclusions are strict, we can find a sufficiently large $n$ such that $$ \p_n \subseteq Q \cap R_n \subsetneq Q_1 \cap R_n \subsetneq \ldots \subsetneq Q_i \cap R_n \subsetneq \m_S \cap R_n = \m_n.$$ But this contradicts the assumption of $S \in \mathcal{M}_{d-i}(R)$, since in that case $\hgt \p_n \geq d-i$.
 \end{proof} 

It follows that, if for instance $\dim R = 3$, then any chain-prime ideal of a monoidal Shannon extension $S$ of $R$ has height at least equal to $\dim S - 1.$ As application of Proposition \ref{height}, we can bound the dimension of the valuation rings belonging to each set $ \mathcal{M}_i(R) $.

Before to do this, we apply by induction Proposition \ref{blowup} in order to study the biregularity of the map $\Spec S \to \Spec R$ (see Definition \ref{birdef}). 

\begin{lemma} \label{biregularity}   
Let $S$ be a monoidal Shannon extension of $R$. 
 Let $P$ be a prime ideal of $S$ which contains only finitely many locus ideals $\p_n$.
 Then, there exists $N \geq 0$, which depends on $P$, such that the map $\Spec S \to \Spec R_N$ is  biregular at $P$. 
\end{lemma}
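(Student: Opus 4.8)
The plan is to reduce the biregularity of $\Spec S \to \Spec R_N$ at $P$ to a telescoping chain of single-step biregularities, each supplied by Proposition~\ref{blowup}(5). First I would reinterpret the hypothesis elementwise. By Proposition~\ref{blowup}(2) applied to each transform $R_{n+1} = R_n[\frac{\p_n}{x_n}]_{\m_{n+1}}$ we have $\p_n S = x_n S$, so $P$ contains the locus ideal $\p_n$ if and only if $x_n \in P$. Since $P$ contains only finitely many of the $\p_n$, there is an integer $N$ with $x_n \notin P$ for all $n \geq N$; this is the $N$ I claim works.

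Next, set $P_n := P \cap R_n$, which is prime since it is the contraction of a prime. I would use that localization commutes with the directed union to write $S_P = \bigcup_{n \geq N} (R_n)_{P_n}$, a directed union: any fraction $s/t$ with $t \in S \setminus P$ has numerator and denominator in some $R_n$ with $t \in R_n \setminus P_n$, and conversely each $(R_n)_{P_n}$ sits inside $S_P$. The heart of the argument is then to show $(R_{n+1})_{P_{n+1}} = (R_n)_{P_n}$ for every $n \geq N$. For this I apply Proposition~\ref{blowup}(5) to the monoidal transform $R_{n+1}$ of $R_n$: its conclusion is that $\Spec R_{n+1} \to \Spec R_n$ is biregular at any prime $\q$ with $x_n R_{n+1} \nsubseteq \q$. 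Taking $\q = P_{n+1}$ and using $x_n \in R_n \subseteq R_{n+1}$, the condition $x_n R_{n+1} \nsubseteq P_{n+1}$ is exactly $x_n \notin P$, which holds because $n \geq N$. Hence $(R_{n+1})_{P_{n+1}} = (R_n)_{P_{n+1} \cap R_n} = (R_n)_{P_n}$, where I use $P_{n+1} \cap R_n = P \cap R_n = P_n$.

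Iterating this equality upward from $N$ collapses the directed union to a single term, giving
$$ S_P = \bigcup_{n \geq N} (R_n)_{P_n} = (R_N)_{P_N} = (R_N)_{P \cap R_N}, $$
which is precisely the assertion that $\Spec S \to \Spec R_N$ is biregular at $P$ in the sense of Definition~\ref{birdef}. I expect the only delicate point to be the bookkeeping that converts the ideal-theoretic hypothesis ``$P$ contains $\p_n$'' into the elementwise condition ``$x_n \in P$'' and thereby extracts a single uniform threshold $N$ past which every one-step map is biregular along $P$; once that is in place, the remainder is a direct and essentially formal assembly of Proposition~\ref{blowup} together with the passage of localization through the directed union.
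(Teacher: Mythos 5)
Your proof is correct and takes essentially the same route as the paper's: choose $N$ past the finitely many locus ideals contained in $P$, then apply Proposition~\ref{blowup}(5) inductively to identify $(R_n)_{P \cap R_n}$ with $(R_N)_{P \cap R_N}$ for all $n \geq N$, so that $S_P = \bigcup_{n \geq N}(R_n)_{P \cap R_n}$ collapses to $(R_N)_{P \cap R_N}$. Your write-up merely makes explicit two points the paper leaves implicit, namely the equivalence $\p_n \subseteq P \Leftrightarrow x_n \in P$ coming from $\p_n S = x_n S$, and the directed-union description of the localization $S_P$.
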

 
 \begin{proof}
Take $N$ such that $\p_n \nsubseteq P \cap R_{n}$ for all $n \geq N$. Applying Proposition \ref{blowup}(5) with an inductive argument, we get $ (R_N)_{P \cap R_N} = (R_n)_{P \cap R_n} $ for all $n \geq N$ and this gives $S_P= (R_N)_{P \cap R}$. 
 \end{proof}

\begin{theorem} \label{valuations}  
Let $V \in \mathcal{M}_{d-i}(R)$ be a valuation ring which is a monoidal Shannon extension of $R$. Then, $$ \dim V \leq i+2 $$ 
\end{theorem}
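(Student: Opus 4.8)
The plan is to bound $\dim V$ by locating a single prime ideal $Q$ of $V$ of height exactly $2$ that contains infinitely many locus ideals, and then feeding it into Proposition~\ref{height}. Since $V \in \mathcal{M}_{d-i}(R)$, that proposition gives $\hgt Q \ge \dim V - i$, which rearranges to $\dim V \le \hgt Q + i = i + 2$. So the entire theorem reduces to producing such a $Q$, and the real work is showing that a height-$2$ prime is forced to contain infinitely many locus ideals.

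The heart of the argument is a dichotomy on $\Spec V$ coming from Lemma~\ref{biregularity}. Suppose $P \in \Spec V$ contains only finitely many locus ideals. Then Lemma~\ref{biregularity} makes the map $\Spec V \to \Spec R_N$ biregular at $P$ for some $N$, which by Definition~\ref{birdef} means $V_P = (R_N)_{P \cap R_N}$. The right-hand side is a localization of the regular local ring $R_N$, hence a regular, and in particular Noetherian, local ring; the left-hand side is a localization of the valuation ring $V$, hence itself a valuation ring. A Noetherian valuation domain is a field or a DVR, so $\hgt_V P = \dim V_P \le 1$. Taking the contrapositive, \emph{every prime of $V$ of height at least $2$ contains infinitely many locus ideals}. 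This step, the interaction of biregularity with the fact that a regular local ring that happens to be a valuation ring must collapse to dimension $\le 1$, is where I expect the main obstacle to lie; the rest is bookkeeping.

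It remains to secure a prime of height exactly $2$. If $\dim V \le 1$ there is nothing to prove, since $i \ge 0$ whenever $\mathcal{M}_{d-i}(R)$ is defined (Definition~\ref{MiR} forces $0 \le i \le d-2$), so $\dim V \le 1 \le i+2$. If $\dim V \ge 2$, I would invoke the standard fact that the spectrum of a valuation ring is a chain whose primes realize every height up to $\dim V$ without gaps; hence there is a prime $Q$ with $\hgt Q = 2$. By the dichotomy above, $Q$ contains infinitely many locus ideals, so Proposition~\ref{height} applies and gives $2 = \hgt Q \ge \dim V - i$, that is $\dim V \le i + 2$. (Note this also forces $\dim V$ to be finite: were it infinite, the no-gap property would still supply a height-$2$ prime and the same inequality would yield the contradiction $\dim V \le i+2 < \infty$.) This completes the plan.
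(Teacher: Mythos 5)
Your argument is essentially the paper's own proof. Both dispose of $\dim V \leq 1$ trivially; both use Lemma~\ref{biregularity} in contrapositive form, observing that a prime $P$ containing only finitely many locus ideals has $V_P = (R_N)_{P \cap R_N}$, which is Noetherian, while a valuation ring of dimension at least $2$ is not (your ``a Noetherian valuation domain is a field or a DVR'' step is exactly the justification the paper leaves implicit when it says the two-dimensional valuation ring $V_Q$ ``is not a localization of $R_n$ for any $n$''); and both then feed a height-two prime $Q$ into Proposition~\ref{height} to get $2 = \hgt Q \geq \dim V - i$.

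The one place you go beyond the paper is the parenthetical claiming the argument also excludes $\dim V = \infty$, and there the ``standard fact'' you invoke is false: an infinite-dimensional valuation ring need not have primes of every finite height. For instance, let the value group be $\bigoplus_{n \geq 1} \Z$, ordered so that the sign of an element is the sign of its nonzero coordinate of largest index. Its convex subgroups form a single strictly increasing chain $H_1 \subsetneq H_2 \subsetneq \cdots$ whose union is the whole group, so the nonzero primes of the corresponding valuation ring form a strictly decreasing chain with intersection $(0)$: every nonzero prime has infinite height, and no prime of height $1$ or $2$ exists. The paper makes the same tacit assumption (it speaks of ``the height two prime ideal of $V$''), so your core proof is no less complete than the paper's, but the patch should not be the no-gap property. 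One correct fix: $V$ birationally dominates the Noetherian local domain $R$, so Abhyankar's inequality gives $\dim V \leq \dim R < \infty$, and a finite-dimensional valuation ring does have $\Spec V$ a finite chain realizing all heights. Alternatively, avoid height-two primes altogether: if $\dim V \geq i+3$, choose a chain $q_0 \subsetneq q_1 \subsetneq q_2 \subsetneq \cdots \subsetneq q_{i+3} = \m_S$; then $\dim V_{q_2} \geq 2$, so $q_2$ contains infinitely many locus ideals by your dichotomy, and contracting the subchain above $q_2$ to $R_n$ for $n \gg 0$ produces a chain of length $i+1$ above a prime of height at least $d-i$ in the $d$-dimensional ring $R_n$, the same contradiction as in the proof of Proposition~\ref{height}.
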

\begin{proof}
 When $\dim V=1$ the result is clear, since $i \geq 0.$
 
 Thus assume $\dim V \geq 2$ and call $Q$ the height two prime ideal of $V$. The ring $V_Q$ is a two dimensional valuation domain and therefore it is not a localization of $R_n$ for any $n$. Hence by Proposition \ref{biregularity}, $Q$ must contain infinitely many locus ideals $\p_n$. By Proposition \ref{height}, $ 2 = \hgt Q \geq \dim V - i $, and hence $ \dim V \leq i+2 $.
 \end{proof} 
 
This theorem generalizes the fact that a valuation ring, which is a quadratic Shannon extension of a regular local ring, has dimension at most two \cite[Theorem 8.1]{HLOST}.

\medskip

\section{Monoidal Shannon extensions with principal maximal ideal} \label{2classection}

In this section we describe the family of monoidal Shannon extensions with principal maximal ideal, focusing on the case in which they are in the set $ \mathcal{M}_{d-1}(R) $ (Definition \ref{MiR}).
We assume as before $ \dim R_n=d \geq 3$ for all $n$. 

\begin{proposition} \label{principalchain}  
Let $S $ be any monoidal Shannon extension of $S$ such that $\m_S = xS$ is principal. Then $\m_S$ is a chain-prime ideal.
\end{proposition}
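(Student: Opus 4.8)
The plan is to produce an explicit infinite chain of locus ideals whose union is $\m_S$. I first record some reductions. Since $\m_S=xS$ is finitely generated, Proposition~\ref{mingen} gives that $x$ is a regular parameter of $R_n$ for $n\gg 0$; for such $n$ the ideal $xS\cap R_n$ contains $\m_n$ and is proper, so $\m_n=xS\cap R_n$. Every locus ideal satisfies $\p_n\subseteq\m_n\subseteq\m_S=xS$, and consequently $x\mid x_n$ in $S$ and $x_n/x\in S$ for all $n$. Call an index $n$ \emph{good} if $x\in\p_n$.

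The heart of the argument, and the step I expect to be the main obstacle, is the claim that there are infinitely many good indices. I would prove this by contradiction: suppose $x\notin\p_n$ for all $n\ge M$, and consider the quotients $\overline{R}_n:=R_n/xR_n$, which are regular local of dimension $d-1\ge 1$, hence not fields. The key point is that each transition map $\overline{R}_n\to\overline{R}_{n+1}$ is injective. Indeed, if $x\notin\p_n$ then $x$ and $x_n$ cannot be associate in $R_{n+1}$: otherwise $xR_{n+1}=x_nR_{n+1}=\p_nR_{n+1}$, and contracting via Proposition~\ref{blowup}(3) (which says $x_nR_{n+1}\cap R_n=\p_n$) would force $x\in\p_n$. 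Thus $\p_nR_{n+1}=x_nR_{n+1}\nsubseteq xR_{n+1}$, so Proposition~\ref{blowup}(5) applies at the height-one prime $xR_{n+1}$ and gives that $xR_{n+1}\cap R_n$ is a height-one prime of $R_n$; since it contains the height-one prime $xR_n$, it equals $xR_n$. This is precisely injectivity of $\overline{R}_n\hookrightarrow\overline{R}_{n+1}$. As these maps are also local, $\bigcup_n\overline{R}_n$ is a local domain with nonzero maximal ideal $\bigcup_n\overline{\m}_n\supseteq\overline{\m}_M$. But forming the quotient commutes with the directed union, so $\bigcup_n\overline{R}_n=S/xS=S/\m_S$ is a field, a contradiction. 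Hence infinitely many indices are good.

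Next I would convert good indices into the chain. For a good index $n$ one has $\p_nS=\m_S$: from $x\in\p_n$ we get $xS\subseteq\p_nS$, while $\p_n\subseteq\m_S$ gives $\p_nS\subseteq\m_S=xS$; equivalently $x_nS=xS$. To assemble an honest chain, start from a good index $m$. Since $x_m/x\in S=\bigcup_kR_k$, there is $k$ with $x_m/x\in R_k$; choosing a good index $m'\ge k$, we obtain $x\mid x_m$ in $R_{m'}$, and because $x\in\p_{m'}$ this yields $x_m\in xR_{m'}\subseteq\p_{m'}$. Therefore $\p_m\subseteq\p_mR_{m'}=x_mR_{m'}\subseteq\p_{m'}$. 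Iterating produces good indices $m_1<m_2<\cdots$ with $\p_{m_1}\subseteq\p_{m_2}\subseteq\cdots$, an infinite chain of locus ideals whose union, by Proposition~\ref{singlechain}(2), is $\bigcup_i x_{m_i}S=\bigcup_i\m_S=\m_S$. Thus $\m_S$ is a chain-prime ideal.

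I expect the delicate point to be the quotient-by-$x$ argument in the second paragraph, specifically verifying injectivity of $\overline{R}_n\hookrightarrow\overline{R}_{n+1}$ via Proposition~\ref{blowup}; the remaining steps are essentially bookkeeping once good indices are shown to be infinite and automatically nestable. As a sanity check, this is consistent with the framework of Section~2: a principal maximal ideal is trivially a directed union of principal ideals, so Theorem~\ref{directSBID} already tells us $S$ is a SBID, and the proof above refines this by realizing $\m_S$ concretely as a union of a chain of locus ideals.
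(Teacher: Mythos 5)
Your proof is correct, and its skeleton coincides with the paper's: both consider the collection of locus ideals containing $x$ (your ``good'' indices), show it is infinite, and show these ideals chain up to $\m_S$. The differences are in execution, and they cut in opposite directions. For the infinitude claim, the paper offers only one sentence --- if all but finitely many $\p_n$ avoided $x$, one could find $y \in \m_n \setminus xR_n$ with $y \in \m_S \setminus xS$, contradicting $\m_S = xS$ --- and your quotient argument (injectivity of $R_n/xR_n \hookrightarrow R_{n+1}/xR_{n+1}$ via Proposition~\ref{blowup}(3) and (5), whence $xS \cap R_n = xR_n$ and $S/xS$ could not be a field) is precisely the justification this assertion needs; the step you flagged as the main obstacle is exactly the step the paper leaves implicit. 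For the chain itself, the paper is more economical: if $x \in \p_{n_i}$ and $\p_{n_i}R_{n_i+1} = tR_{n_i+1}$, then $x/t \in R_{n_i+1}$ is a unit of $S$, hence a unit of $R_{n_i+1}$ because $\m_{n_i+1} \subseteq \m_S$, so that $\p_{n_i}R_{n_i+1} = xR_{n_i+1}$; consequently for any two good indices $n_i < n_j$ one gets $\p_{n_i} \subseteq xR_{n_j} \subseteq \p_{n_j}$, and the \emph{entire} collection of good locus ideals is totally ordered, with no selection needed. Your variant --- associateness of $x_m$ and $x$ in $S$, followed by descent of the divisibility into some later $R_{m'}$ to extract a cofinal subchain --- reaches the same conclusion with extra bookkeeping; the paper's locality trick removes it. Both arguments are sound, and your closing observation that Theorem~\ref{directSBID} already gives the SBID property matches the paper's own remark after Definition~\ref{chainprimeideal}, where chain-primality of $\m_S$ is identified with the SBID condition.
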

 
 \begin{proof}
 Consider the collection of locus ideals $ \mathcal{C}=\lbrace \p_{n_i} \, | \, x \in \p_{n_i} \rbrace $. This collection is infinite, since otherwise we would find an $n \gg0$ and $y \in \m_n \setminus xR_n$ such that also $y \in \m_S \setminus xS$. We prove that $ \mathcal{C} $ is a chain, proving that, if $x \in \p_{n_i}$, then $ \p_{n_i}R_{n_i+1} = xR_{n_i+1} $. Indeed, if $  x \in \p_{n_i}R_{n_i+1} = tR_{n_i+1} \subseteq tS $, we have $ t \in \m_S $ and $\frac{x}{t}$ is a unit in $S$. Hence, since $\m_{n_i+1} \subseteq \m_S$, we get $\frac{x}{t}$ is a unit in $R_{n_i+1}$ and the proof is complete.
 \end{proof}


 It is easy to observe that a quadratic Shannon extension of a regular local ring $R$ is a monoidal Shannon extension of $R$ with only one chain-prime ideal equal to the maximal ideal $\m_S$. 
The converse of 
 this fact is not true in general. It is possible to construct many monoidal non quadratic Shannon extension with only one chain-prime ideal equal to $\m_S$. But instead it turns out to be true if we assume $S \in \mathcal{M}_{d-1}(R)$ and $\m_S$ to be principal. To prove this we are going to use the characterization of quadratic Shannon extensions as pullbacks proved in \cite{GHOT}. 

 
 For an extensive study about pullback construction in ring theory see for example \cite{FHP, Gabelli-Houston, GH}. We recall that an integral domain $A$ is {\it archimedean} if $\bigcap_{n>0} a^n A = 0$ for each nonunit $a \in A$.

\begin{theorem} \label{quadraticprincipal}  
 Let $S$ 
 be any monoidal Shannon extension of $S$ such that $\m_S = xS$ is principal. Let $Q = \bigcap^{\infty}_{j\geq 0}x^jS$. The following are equivalent:
 \begin{enumerate}
\item
 $S$ is a quadratic Shannon extension of $R_n$ for some $n$. 
 \item
 The ring $S_Q$ is a localization of $R_n$ at the prime ideal $Q \cap R_n$ for some $n$.
 \item
 There are only finitely many locus ideals $\p_n$ contained in $Q$.
\end{enumerate}
\end{theorem}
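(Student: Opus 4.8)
The plan is to establish $(2)\Leftrightarrow(3)$ by a biregularity argument and $(1)\Leftrightarrow(2)$ through the pullback characterization of quadratic Shannon extensions from \cite{GHOT}. Before anything else I would record the internal structure of $Q=\bigcap_{j\geq 0}x^jS$. Since $\m_S=xS$ we have $\m_S^j=x^jS$, so in $S/Q$ each nonzero element has a finite largest power of $\bar x$ dividing it (otherwise it would lie in $\bigcap_j \bar x^j(S/Q)=0$); writing it as $\bar x^k\bar u$ with $\bar u\notin\bar x(S/Q)$ forces $\bar u$ to be a unit, so $S/Q$ is a DVR with uniformizer $\bar x$. The same computation shows that every non-unit $a\in S\setminus Q$ is associate to a power $x^k$, whence $Q\subseteq x^kS=aS$ (and $Q\subseteq aS=S$ trivially when $a$ is a unit). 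Thus $Q$ is a divided prime, $Q=QS_Q$, and $S$ is the pullback of the two natural maps $S_Q\to\kappa(Q)$ and $S/Q\hookrightarrow\kappa(Q)=\mathrm{Frac}(S/Q)$; this pullback description is what will carry the argument.

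For $(2)\Leftrightarrow(3)$ I would argue as follows. If only finitely many locus ideals $\p_n$ are contained in $Q$, then Lemma~\ref{biregularity} applied to the prime $Q$ yields an $N$ with $S_Q=(R_N)_{Q\cap R_N}$, which is exactly $(2)$. For the converse I would prove the contrapositive: if infinitely many $\p_m$ satisfy $\p_m\subseteq Q$, then for each such $m$ we get $x_mR_{m+1}=\p_mR_{m+1}\subseteq Q\cap R_{m+1}$, so biregularity must fail at $Q\cap R_{m+1}$, since otherwise the height-one prime $x_mR_{m+1}$ would contract to the height $\geq 2$ prime $\p_m$ (Proposition~\ref{blowup}(2),(3) against Proposition~\ref{blowup}(5)). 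Each failure gives a strict inclusion $(R_m)_{Q\cap R_m}\subsetneq (R_{m+1})_{Q\cap R_{m+1}}$. Since $S_Q=\bigcup_m (R_m)_{Q\cap R_m}$ then exhibits infinitely many strict inclusions, it cannot equal a single localization $(R_n)_{Q\cap R_n}$, contradicting $(2)$.

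For $(1)\Leftrightarrow(2)$ I would invoke \cite{GHOT}. Under $(2)$ the ring $S_Q=(R_N)_{Q\cap R_N}$ is a localization of the regular local ring $R_N$, hence itself regular local; combined with the first paragraph, $S$ is the pullback of a regular local localization of $R_N$ and the DVR $S/Q$ glued along the residue field $\kappa(Q)$. This is precisely the pullback form that \cite{GHOT} characterizes as a quadratic Shannon extension, so $S$ is a quadratic Shannon extension of $R_N$, giving $(1)$. Conversely, if $S$ is a quadratic Shannon extension of $R_n$, the same characterization presents $S$ as such a pullback with bottom ring $S_Q=(R_n)_{Q\cap R_n}$, which is $(2)$.

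The hard part will be this last step: matching the pullback datum $(S_Q,\,S/Q,\,\kappa(Q))$ to the exact hypotheses of the characterization in \cite{GHOT}, and in particular checking that the base regular local ring can be taken to be the member $R_N$ of the given chain supplied by Lemma~\ref{biregularity}, with the residual DVR and the regular local localization satisfying the compatibility needed to reconstruct a genuinely quadratic sequence. The subtlety to keep in mind is that the given monoidal presentation may contain infinitely many proper monoidal transforms with $x\in\p_m$, so $(2)$ does \emph{not} force the given sequence to be eventually quadratic; it only forces the ring $S$ to admit \emph{some} quadratic presentation over $R_N$, which is exactly why the presentation-independent pullback description, rather than a direct combinatorial comparison of the locus ideals, is essential.
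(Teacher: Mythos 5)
Your architecture is essentially the paper's: the pullback description of $S$ with $S/Q$ a DVR (the paper quotes \cite[Theorem 2.4]{Fon}; your direct derivation is fine), Lemma~\ref{biregularity} for $(3)\Rightarrow(2)$, and the pullback characterization of quadratic Shannon extensions from \cite{GHOT} for $(1)\Leftrightarrow(2)$. Where you genuinely differ is in how the cycle is closed: the paper proves $(1)\Rightarrow(3)$ by asserting that the locus ideals of a quadratic Shannon extension eventually form a single chain with union $\m_S$, which tacitly identifies the given monoidal presentation with the quadratic one; your direct contrapositive proof of $(2)\Rightarrow(3)$ is presentation-independent and correct: if $\p_m\subseteq Q$, then biregularity at $Q\cap R_{m+1}$ would identify $(R_{m+1})_{Q\cap R_{m+1}}$ with $(R_m)_{Q\cap R_m}$ and so force the height-one prime $x_mR_{m+1}$ of Proposition~\ref{blowup}(2) and its contraction $\p_m$ of Proposition~\ref{blowup}(3), of height at least $2$, to have equal height; the resulting infinitely many strict inclusions $(R_m)_{Q\cap R_m}\subsetneq (R_{m+1})_{Q\cap R_{m+1}}$ rule out $S_Q=(R_n)_{Q\cap R_n}$ for any $n$. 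This piece of your proposal is, if anything, cleaner than the paper's corresponding step.

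There are, however, two genuine gaps. The main one is $(2)\Rightarrow(1)$: you assert that the datum $(S_Q,\,S/Q,\,\kappa(Q))$ ``is precisely the pullback form'' characterized in \cite{GHOT} and defer the hypothesis-matching as the hard part, but that matching is the entire content of the implication. \cite[Theorem 4.8]{GHOT} requires the ring glued on top of the pullback to have divergent multiplicity sequence with respect to a regular local subring, and the paper's proof consists exactly of the observation that discharges this: a DVR, such as $S/Q$, has divergent multiplicity sequence with respect to any of its regular local subrings. Without identifying and verifying that hypothesis, this direction remains unproved. The second gap is the degenerate case $Q=(0)$: there your own first paragraph shows that $S$ itself is a DVR, $S_Q=\kappa(Q)$ is the quotient field, and the pullback collapses, so the characterization theorem cannot be invoked; the paper treats this case separately, concluding that $S$ is a DVR by \cite[Exercise 1.5]{Kap} and hence a quadratic Shannon extension by \cite[Proposition 3.4]{GHOT}, with $(2)$ and $(3)$ trivially true. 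A last, smaller point (left implicit by the paper as well, which quotes \cite[Proposition 3.3]{HLOST}): in $(1)\Rightarrow(2)$ the quadratic presentation of $S$ over $R_N$ is a different sequence from the given monoidal one, so the cited results first produce $S_Q$ as a localization of a member of the quadratic sequence; to land on statement $(2)$, which refers to the given $R_n$, one still needs the routine remark that the two sequences are mutually cofinal in $S$, so that such a localization is sandwiched between, and hence equal to, localizations of monoidal-sequence members.
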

 
 \begin{proof}
 First consider the case in which $Q=(0)$ and therefore $S$ is a DVR by \cite{Kap}(Exercise 1.5). Now, since $S$ birationally dominates $R$, $S$ is a quadratic Shannon extension by \cite{GHOT}(Proposition 3.4). Clearly (2) and (3) are true in this case. 
 
 
 Suppose now $Q \neq (0)$. By \cite[Theorem 2.4]{Fon}, $S$ occurs in the pullback diagram 
 \begin{center}
\begin{tikzcd}
   S \arrow[rightarrow]{r}\arrow[hookrightarrow]{d} 
  &  \dfrac{S}{Q}\arrow[hookrightarrow]{d}
  \\ 
   S_Q\arrow{r}
 &  \kappa(Q)
\end{tikzcd}
\end{center}
 where $\frac{S}{Q}$ is a DVR. 
  If $S$ is a quadratic Shannon extension, then 
  (2) follows from \cite[Proposition 3.3]{HLOST}. 
Conversely, if $S_Q$ is equal to $(R_n)_{Q \cap R_n}$, we apply \cite[Theorem 4.8]{GHOT} to say that $S$ is a quadratic Shannon extension of $R_n$, since a DVR has divergent multiplicity sequence with respect to any of its regular local subrings (The definition of multiplicity sequence together with results and examples about its relation with Shannon extensions is given in \cite[Section 3]{GHOT}).


We observe that (3) implies (2) by Lemma \ref{biregularity}.
To conclude we need to show that (1) implies (3). But, if $S$ is a quadratic Shannon extension, its locus ideals $\p_n = \m_n$ form eventually a unique chain whose union is $\m_S$. Since $Q \subsetneq \m_S$, only finitely many of them can be contained in $Q$.
 \end{proof}

 We specialize to the case in which the Shannon extensions are in $\mathcal{M}_{d-1}(R)$. We observe that this case completely describes the monoidal Shannon extensions of a regular local ring of dimension $3$.
 
 \begin{corollary} \label{corprinc1}  
 Let $S \in \mathcal{M}_{d-1}(R)$ be a monoidal non-quadratic Shannon extension of $S$ such that $\m_S = xS$ is principal. Let $Q = \bigcap^{\infty}_{j\geq 0}x^jS$. Then $Q$ is a chain-prime ideal.
\end{corollary}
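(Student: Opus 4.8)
The plan is to realize $Q$ \emph{explicitly} as the union of an ascending chain of locus ideals, namely the contractions $Q \cap R_n$ for large $n$. First I would record that $Q \subsetneq \m_S$: if $x \in Q = \bigcap_{j} x^j S$ then $x \in x^2 S$ forces $x$ to be a unit, so $x \notin Q$ and $Q$ is a proper prime sitting below $\m_S = xS$. Since $S$ is not a quadratic Shannon extension, the hypothesis makes condition (1) of Theorem \ref{quadraticprincipal} fail, hence so does the equivalent condition (3); thus there are infinitely many locus ideals $\p_n$ with $\p_n \subseteq Q$. I write $L = \{n : \p_n \subseteq Q\}$, an infinite set.

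The heart of the argument is to show that $Q \cap R_n = \p_n$ for all sufficiently large $n \in L$. I would first observe that $Q \cap R_n = \m_n$ can occur for only finitely many $n$: otherwise, since the $\m_n$ are increasing, $\m_S = \bigcup_n \m_n \subseteq Q$, contradicting $Q \subsetneq \m_S$. Hence for cofinally many $n \in L$ we have $\p_n \subseteq Q \cap R_n \subsetneq \m_n$. Now I run a height count inside the regular (in particular catenary) local ring $R_n$: because $S \in \mathcal{M}_{d-1}(R)$ we have $\hgt \p_n \ge d-1$ (Definition \ref{MiR}), while $Q \cap R_n$, being a prime properly contained in the height-$d$ maximal ideal $\m_n$, has height at most $d-1$. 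Since $\p_n \subseteq Q \cap R_n$ and the two primes then have the same height $d-1$ in a catenary domain, they coincide: $Q \cap R_n = \p_n$. I expect this height identification to be the main obstacle, i.e. the one step genuinely using the structural hypotheses (membership in $\mathcal{M}_{d-1}(R)$ together with the principality of $\m_S$, the latter entering through $Q \subsetneq \m_S$ and through Theorem \ref{quadraticprincipal}).

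Once this is in hand the conclusion is formal. For $m < n$ in the cofinal subset of $L$, contracting gives $\p_m = Q \cap R_m = (Q \cap R_n) \cap R_m = \p_n \cap R_m \subseteq \p_n$, so these locus ideals form an infinite ascending chain in the sense of Definition \ref{chain}. Taking the union and using $\bigcup_n R_n = S$ (the chosen indices are arbitrarily large and the $Q \cap R_n$ are increasing), I obtain $\bigcup_n \p_n = \bigcup_n (Q \cap R_n) = Q \cap S = Q$. By Proposition \ref{singlechain} this union is a prime ideal, and by Definition \ref{chainprimeideal} it is exactly a chain-prime ideal. Therefore $Q$ is a chain-prime ideal, as claimed.
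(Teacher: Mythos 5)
Your proposal is correct and follows essentially the same route as the paper's proof: non-quadraticity combined with Theorem \ref{quadraticprincipal} produces infinitely many locus ideals inside $Q$, and the $\mathcal{M}_{d-1}(R)$ height count inside $R_n$ (using $Q \cap R_n \subsetneq \m_n$ for $n \gg 0$) identifies $\p_n = Q \cap R_n$, so these locus ideals form a chain. The only minor variation is at the end, where the paper invokes Proposition \ref{height} to see that the union of the chain has height at least $\dim S - 1$ and hence must equal $Q$, while you conclude directly from $\p_n = Q \cap R_n$ that the union is $Q$; both are valid, and your write-up also makes explicit the step that $Q \cap R_n = \m_n$ can happen only finitely often, which the paper leaves terse.
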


 \begin{proof}
 By Theorem \ref{quadraticprincipal} the ideal $Q$ contains infinitely many locus ideals $\p_n$. Since $S \in \mathcal{M}_{d-1}(R)$, then for any of these ideals and for $n \gg0$, we have $\hgt \p_{n}= d-1$ and therefore, by Proposition \ref{singlechain} and since $Q \subsetneq \m_S$, $\p_n = Q \cap R_n$. Thus, if $\p_n, \p_m \subseteq Q$ and $n < m$,
 $$ \p_{n} = Q  \cap R_{n} \subseteq Q  \cap R_{m} = \p_{m} $$ and hence the infinitely many locus ideals contained in $Q$ form eventually a chain. Again by Proposition \ref{singlechain}, the union of such a chain is a prime ideal of $S$, which is of height at least $\dim S-1$ by Proposition \ref{height}. Hence this union is equal to $Q$ and $Q$ is a chain-prime ideal.
 \end{proof}  
 
\begin{proposition} \label{chainprimeloc}  
Let $S \in \mathcal{M}_{d-1}(R)$ and let $Q= \bigcup_{i \in \N} \p_{n_i} \subsetneq \m_S$ a chain-prime ideal of $S$. Then $ S_Q $ is a quadratic Shannon extension of a regular local ring of dimension $d-1$.
\end{proposition}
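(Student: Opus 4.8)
The plan is to realise $S_Q$ as a directed union of localisations of the $R_n$ and to show that, after collapsing the redundant steps, this union is precisely a sequence of local quadratic transforms of a regular local ring of dimension $d-1$. Since localisation commutes with directed unions, $S_Q = \bigcup_{n} (R_n)_{Q \cap R_n}$. First I would record that $Q \subsetneq \m_S$ forces $Q \cap R_n \subsetneq \m_n$ for all $n$ past some index $n_0$: choose a witness $a \in \m_S \setminus Q$ and take $n_0$ with $a \in R_{n_0}$. I then classify the transforms $R_n \to R_{n+1}$ according to whether the locus ideal $\p_n$ lies in $Q$ or not, noting that, since $\p_nR_{n+1}=x_nR_{n+1}$, one has $\p_n \subseteq Q$ if and only if $x_n \in Q$.

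Two computations then separate the two types. For a step with $\p_n \not\subseteq Q$ (so $x_n \notin Q$), we have $x_nR_{n+1} \nsubseteq Q \cap R_{n+1}$, and Proposition~\ref{blowup}(5) gives biregularity at $Q \cap R_{n+1}$, i.e. $(R_{n+1})_{Q \cap R_{n+1}} = (R_n)_{Q \cap R_n}$; such steps leave the localisation unchanged. For a step with $\p_n \subseteq Q$ and $n \ge n_0$ I claim $Q \cap R_n = \p_n$ and $\hgt \p_n = d-1$: since $S \in \mathcal{M}_{d-1}(R)$ we have $\hgt \p_n \in \{d-1, d\}$, and the value $d$ would give $\p_n = \m_n \subseteq Q \cap R_n \subsetneq \m_n$, a contradiction; as the only primes of $R_n$ over the height-$(d-1)$ prime $\p_n$ are $\p_n$ and $\m_n$, we conclude $Q \cap R_n = \p_n$. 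Hence $(R_n)_{Q \cap R_n} = (R_n)_{\p_n} =: A$ is a regular local ring of dimension $d-1 \ge 2$ (recall $d \ge 3$ in this section). These locus ideals $\p_n \subseteq Q$ form the infinite chain defining $Q$, so their indices are cofinal in $\N$.

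The crux, and the step I expect to be the main obstacle, is to show that for a step of the second type the localised transform $(R_{n+1})_{Q \cap R_{n+1}}$ is a \emph{local quadratic} transform of $A = (R_n)_{\p_n}$. Writing $B = R_n[\frac{\p_n}{x_n}]$ so that $R_{n+1} = B_{\m_{n+1}}$, a localisation-of-a-localisation argument gives $(R_{n+1})_{Q \cap R_{n+1}} = B_{Q \cap B}$, and $Q \cap B$ lies over $Q \cap R_n = \p_n$. The key identity is the base change $B \otimes_{R_n} A = A\big[\frac{\p_n A}{x_n}\big]$, which is exactly the affine blowup algebra of the maximal ideal $\m_A = \p_n A$ of $A$ along $x_n$, i.e. the quadratic transform algebra of $A$. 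Thus $B_{Q \cap B}$ is a localisation of $A[\frac{\m_A}{x_n}]$ at a prime lying over $\m_A$ and containing $x_n$. To see this prime is maximal — so that $B_{Q \cap B}$ is a genuine local quadratic transform — I would use that, chaining the intervening biregular (type-one) steps up to the next chain index $m$, this localisation coincides with $(R_{m})_{\p_{m}}$, which has dimension $d-1$; since $A[\frac{\m_A}{x_n}]$ is a $(d-1)$-dimensional finitely generated algebra over the excellent local ring $A$, the dimension formula then forces the localising prime to be maximal.

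Finally I would assemble these facts along $\{R_n\}_{n \ge n_0}$. Passing to the cofinal subsequence of type-two indices, the corresponding rings $A_j := (R_{m_j})_{\p_{m_j}}$ are regular local of dimension $d-1$, each a local quadratic transform of its predecessor (the single monoidal step localises to a quadratic transform, the intervening biregular steps being identities), and there are infinitely many of them. Therefore $S_Q = \bigcup_j A_j$ is a quadratic Shannon extension of the regular local ring $A_0 = (R_{m_0})_{\p_{m_0}}$ of dimension $d-1$, as claimed.
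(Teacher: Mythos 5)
Your proof is correct and takes essentially the same approach as the paper: localize each $R_n$ at $Q \cap R_n$, use Proposition \ref{blowup}(5) to see that the steps with $\p_n \nsubseteq Q$ leave the localization unchanged, show that at the remaining steps $Q \cap R_n = \p_n$ has height $d-1$, and conclude that those steps localize to local quadratic transforms of $(R_n)_{\p_n}$. One small correction: in your maximality argument (a detail the paper in fact glosses over, so it is good that you supply it), the dimension formula should be justified by universal catenarity of the regular local ring $A$ (every regular local ring is Cohen--Macaulay, hence universally catenary), not by excellence, which is not among the hypotheses on $R$; with that substitution your argument is exactly right.
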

 
 \begin{proof}
 As in the proof of the preceding Corollary, since $Q \subsetneq \m_S$, 
 we have $Q \cap R_{n_i} = \p_{n_i}$ for $i \gg 0$. Now, since for $n < m$, $ Q \cap R_n \subseteq Q \cap R_m $, then $Q \cap R_n = \p_n$ if and only if $n \in \{n_i \}_{i \in \N}$.
 



Now consider the rings $ R_n^{\prime}= (R_n)_{(QS_Q \cap R_n)} $. Set theoretically we have $ S_Q = \bigcup_{n \geq 0}^{\infty} R_n^{\prime}$ but we need to show that these rings form a sequence of local quadratic transforms of the regular local ring $R_{\p_0}$. 

For any $k \not \in \{n_i \}_{i \in \N}$, we have $ x_kS= \p_kS \nsubseteq Q $ and hence $x_k \not \in Q$. The ring $R_{k+1}$ is a localization of $R_k[\frac{\p_k}{x_k}]$ at a maximal ideal containing $\p_k$, therefore $ R_{k+1}^{\prime}=R_k^{\prime} $.

Hence we can restrict ourselves to consider the directed union $ S_Q = \bigcup_{i \in \N} R_{n_i}^{\prime}$. For any large $i \gg0$, the contraction $Q \cap R_{n_i}$ is a prime ideal strictly contained in the maximal ideal $\m_{n_i}$ and hence $ Q \cap R_{n_i}= \p_{n_i} $. By this fact and since $\p_{n_i}R_{n_i}^{\prime}$ is the maximal ideal of $R_{n_i}^{\prime}$, for every $i$ the ring $R_{n_{i+1}}^{\prime}$ is a local quadratic transform of $R_{n_i}^{\prime}$ and this completes the proof.
\end{proof}

We describe now the monoidal non-quadratic Shannon extension in $\mathcal{M}_{d-1}(R)$ with principal maximal ideal. Let $x \in S$ such that $\m_S = xS$ and let $Q = \bigcap^{\infty}_{j\geq 0}x^jS$. Notice that $Q \neq (0)$ since $S$ cannot be a DVR, because a DVR would be a quadratic extension. Hence $S$ is the pullback of the diagram \begin{center}
\begin{tikzcd}
   S \arrow[rightarrow]{r}\arrow[hookrightarrow]{d} 
  &  \dfrac{S}{Q}\arrow[hookrightarrow]{d}
  \\ 
   S_Q\arrow{r}
 &  \kappa(Q)
\end{tikzcd}
\end{center} where $\frac{S}{Q}$ is a DVR (\cite{Fon}(Theorem 2.4)).


\begin{theorem} 
 \label{carprinc}  
 Let $S \in \mathcal{M}_{d-1}(R)$ be a monoidal, but not quadratic Shannon extension of $S$ such that $\m_S = xS$ is principal and let $Q = \bigcap^{\infty}_{j\geq 0}x^jS$. 
 Then:
 \begin{enumerate}
 \item $S$ has exactly two chain-prime ideals $Q$ and $\m_S$.
 \item  $S_Q$ is a quadratic Shannon extension of a regular local ring of dimension $d-1$. 
 \item The following are equivalent: \\
 (i) $S$ is a valuation domain. \\
 (ii) $S$ is a GCD domain. \\
 (iii) $S_Q$ is a GCD domain.\\
 (iv) $S_Q$ is a valuation domain.
 \item  $S$ admits a unique proper minimal overring $T$ that is equal to the Noetherian hull of $S_Q$.
\item The complete integral closure $S^{\ast}$ of $S$ is equal to the complete integral closure of $S_Q$.
 \end{enumerate}
 
\end{theorem}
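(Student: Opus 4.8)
The plan is to dispatch the five items in order, using throughout the pullback description recorded before the statement, in which $S/Q$ is a DVR and $\kappa(Q)=\mathrm{Frac}(S/Q)$. First observe that $x\notin Q$: if $x\in Q=\bigcap_j x^jS$ then $x=x^2s$ for some $s\in S$, forcing $xs=1$, which is impossible. Hence $Q\subsetneq\m_S=xS$, and by Corollary~\ref{corprinc1} and Proposition~\ref{principalchain} both $Q$ and $\m_S$ are chain-prime ideals; item~(2) is then immediate from Proposition~\ref{chainprimeloc} applied to the chain-prime $Q\subsetneq\m_S$. To prove item~(1) I would show these are the only two. Let $Q'$ be a chain-prime ideal with $Q'\neq\m_S$; being prime and contained in the principal maximal ideal, it satisfies $Q'\subsetneq\m_S$ and $x\notin Q'$. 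For $0\neq q\in Q'$ write $q=xq_1$; since $x\notin Q'$ and $Q'$ is prime, $q_1\in Q'$, and iterating yields $q\in x^jS$ for every $j$, so $q\in Q$ and thus $Q'\subseteq Q$. Finally a strict chain $Q'\subsetneq Q\subsetneq\m_S$ is impossible for $S\in\mathcal{M}_{d-1}(R)$ by the argument of Proposition~\ref{height} (it would force $\hgt\p_n\le d-2$ for $n\gg 0$, contradicting $\hgt\p_n\ge d-1$); hence $Q'=Q$.

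For item~(3) I would first record that $S_Q=S[\tfrac1x]$: inverting $x$ removes exactly the unique prime $\m_S$ containing it, and every other prime of $S$ is contained in $Q$, so $S[\tfrac1x]$ is local with maximal ideal $QS[\tfrac1x]$ and coincides with $S_Q$. Then I run a cycle of implications. The implication $(i)\Rightarrow(ii)$ is the standard fact that valuation domains are GCD domains, and $(ii)\Rightarrow(iii)$ is the standard fact that a localization of a GCD domain is again a GCD domain. By item~(2), $S_Q$ is a quadratic Shannon extension, so $(iii)\Leftrightarrow(iv)$ is precisely \cite[Theorem~6.2]{GHOT}. The one substantial step is $(iv)\Rightarrow(i)$: assuming $S_Q$ is a valuation ring, I use that $S=\{w\in S_Q : \bar w\in S/Q\}$, where $\bar w$ denotes the image of $w$ in $\kappa(Q)=\mathrm{Frac}(S/Q)$. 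Given $0\neq\xi\in\mathrm{Frac}(S)$ we may assume $\xi\in S_Q$ (replacing $\xi$ by $1/\xi$, as $S_Q$ is a valuation ring). If $\bar\xi\in S/Q$ then $\xi\in S$; otherwise $\bar\xi\neq0$, so $\xi$ is a unit of $S_Q$ and, since $S/Q$ is a DVR with fraction field $\kappa(Q)$, we have $\overline{1/\xi}=1/\bar\xi\in S/Q$, whence $1/\xi\in S$. Thus $S$ is a valuation domain and the cycle closes.

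For item~(4) the key observation is that $1/x$ is almost integral over $S$: since $Q\neq(0)$, any $0\neq c\in Q$ satisfies $c/x^n\in S$ for all $n$ (because $c\in\bigcap_n x^nS$), so $S[\tfrac1x]\subseteq\tfrac1c S$. Consequently, if $A$ is any Noetherian overring of $S$, then $1/x$ is almost integral, hence integral, over $A$, and as $x\in A$ this forces $1/x\in A$ (cf. \cite[page~10, Theorem~15]{Kap}); therefore $S_Q=S[\tfrac1x]\subseteq A$. Thus every Noetherian overring of $S$ contains $S_Q$, and hence contains the Noetherian hull $\mathcal{N}(S_Q)$ (which exists by \cite[Theorem~4.1]{HLOST}); since $\mathcal{N}(S_Q)$ is itself a Noetherian overring of $S$, it is the unique minimal Noetherian overring $T$ of $S$, equal to the Noetherian hull of $S_Q$.

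For item~(5) the inclusion $S^{\ast}\subseteq(S_Q)^{\ast}$ is immediate from $S\subseteq S_Q$. For the reverse, take $a\in(S_Q)^{\ast}$, so that after clearing denominators $a^n\in\tfrac{1}{c_0}S_Q=\bigcup_j\tfrac{1}{c_0x^j}S$ for some $0\neq c_0\in S$ and all $n$; thus for each $n$ there is $j_n$ with $c_0x^{j_n}a^n\in S$. Fixing $0\neq q\in Q$ (possible since $Q\neq(0)$) and using $q\,x^{-j_n}\in S$, the product $q c_0 a^n=(q\,x^{-j_n})(c_0x^{j_n}a^n)\in S$ for every $n$, so $S[a]\subseteq\tfrac{1}{qc_0}S$ and $a\in S^{\ast}$; hence $S^{\ast}=(S_Q)^{\ast}$. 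I expect the main obstacle to be item~(1), since it requires controlling \emph{all} chain-prime ideals simultaneously, which is exactly why combining the divisibility argument with the height bound of Proposition~\ref{height} is essential. The clause $(iv)\Rightarrow(i)$ of item~(3) and the uniform-denominator trick of item~(5) are the other two points needing genuine ideas, though both become short once the pullback picture and the identity $S_Q=S[\tfrac1x]$ are in place.
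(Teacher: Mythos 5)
Your proposal is correct, and for items (1)--(3) it follows essentially the paper's own route: $\m_S$ and $Q$ are chain-prime by Proposition \ref{principalchain} and Corollary \ref{corprinc1}, uniqueness in item (1) comes from the height bound of Proposition \ref{height} (the paper asserts that $Q$ is the unique prime of height $\dim S-1$, leaving implicit the divisibility argument $q=xq_1$, $q_1 \in Q'$, \ldots\ that you spell out), item (2) is Proposition \ref{chainprimeloc}, and the cycle of implications in item (3) uses \cite[Theorem 6.2]{GHOT} in exactly the same way. Where you genuinely diverge is in three sub-steps. For (iv)$\Rightarrow$(i), the paper just cites the transfer property for pullbacks from \cite[Theorem 2.4]{Fon}, while you verify it directly from the description $S=\lbrace w\in S_Q : \bar w\in S/Q\rbrace$; this costs a few lines but makes the step self-contained. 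For item (4), the paper argues via the ascending chain $(y, \frac{y}{x}, \ldots, \frac{y}{x^{n-1}})A$ in a Noetherian overring $A$, multiplying by $x^n$ to force $x$ to be a unit in $A$; your route via almost integrality of $\frac{1}{x}$ and \cite[page 10, Theorem 15]{Kap} is precisely the argument the paper itself uses for Theorem \ref{specialcase}(6), so it is equally valid and arguably more natural. The most substantial difference is item (5): the paper splits into the cases where $S_Q$ is archimedean or not and invokes Theorems 6.2 and 6.9 of \cite{HLOST} on complete integral closures of quadratic Shannon extensions, whereas your uniform-denominator trick, writing $qc_0a^n=(qx^{-j_n})(c_0x^{j_n}a^n)\in S$, proves $(S_Q)^{\ast}\subseteq S^{\ast}$ in one stroke, with no case split and no dependence on that external structure theory. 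The paper's version buys brevity and ties the result to the known description of $(S_Q)^{\ast}$; yours buys an elementary, self-contained argument, which for this item is a genuine simplification.
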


 
\begin{proof}
 1) Proposition \ref{principalchain} implies that $\m_S$ is a chain-prime ideal. 
 By Corollary \ref{corprinc1}, $Q$ is a nonzero chain-prime ideal. Moreover, it is the unique prime ideal of $S$ of height dim$(S)-1$. Take another chain-prime ideal $Q_i= \bigcup_{i \in \N} \p_{n_i} $ of $S$. Proposition \ref{height} implies that the height of $Q_i$ is at least dim$(S)-1$, hence either $Q_i=Q$ or $Q_i= \m_S$.
 

 

\noindent 2) It follows from Theorem \ref{chainprimeloc}.

\noindent 3) The implication (i) $ \Longrightarrow $ (ii) $\Longrightarrow$ (3) are clear. 
The implication (iii) $ \Longrightarrow $ (iv) is obtained applying (2) and \cite[Theorem 6.2]{GHOT}. Finally (iv) $ \Longrightarrow $ (i) follows by a well known transfer property in pullback diagrams \cite{Fon}(Theorem 2.4), since $\frac{S}{Q}$ is a DVR.

\noindent 4) Let $T^{\prime}$ be the Noetherian hull of $S_Q$. If $T$ exists, $T \subseteq T^{\prime}$. Take a Noetherian overring $A$ of $S$. For any $y \in Q$, the element $\frac{y}{x^{n}} \in S \subseteq A$ for every $n \geq 0.$ But $A$ is Noetherian and therefore there is $n \geq 1$ such that $\frac{y}{x^{n}} \in (y, \frac{y}{x}, \ldots, \frac{y}{x^{n-1}})A$. Hence, multiplying $x^n$, we get $ y= y(a_n x^n+ a_{n-1} x^{n-1} + \ldots + a_1 x) $ for some $a_i \in A$. It follows that $1= x(\sum^{n}_{i=1} a_i x^{i-1})$ and $x$ is a unit in $A$. Hence $S_Q= S[\frac{1}{x}] \subseteq A$ and $T^{\prime} \subseteq A$. Thus the Noetherian hull of $S$ exists and coincides with $T^{\prime}$.



\noindent 6) The element $\frac{1}{x}$ is almost integral over $S$, hence we have $S_Q= S[\frac{1}{x}] \subseteq S^{\ast} \subseteq (S_Q)^{\ast}$. Assume first $S_Q$ non archimedean. In this case, by \cite{HLOST}(Theorem 6.9), $ (S_Q)^{\ast}= T = S_Q[\frac{1}{y}]$ where $y \in QS_Q=Q \subseteq S$ is such that $QS_Q= \sqrt{yS_Q}$. Hence $ (S_Q)^{\ast}= S[\frac{1}{y}] $ and $\frac{1}{y}$ is almost integral over $S$. It follows that $ (S_Q)^{\ast}= S^{\ast} $. 
Consider now the case in which $S_Q$ is archimedean. Its complete integral closure $(S_Q)^{\ast}$ is equal to $$(QS_Q :_F QS_Q)= (Q :_F Q)$$ where $F$ is the quotient field of $R$ (see \cite{HLOST}(Theorem 6.2)). Since $ (S_Q)^{\ast}= (Q :_F Q) $ is a fractional ideal of $S$ that contains $S^{\ast}$, it follows that $ (S_Q)^{\ast}= S^{\ast} $.
 \end{proof}

 \begin{example} \label{dv3} Let $R$ be a regular local ring of dimension $3$ with
maximal ideal $\m=(x,y,z)R$ and let $ s_k:= \sum_{j=1}^{k}j $. We start taking at the first step the ideal $\p_0= (x,y)R$ and dividing by $x$ and then taking at the second step $\p_1=(\frac{y}{x}, z)R_1$ and dividing by $\frac{y}{x}$. 

We iterate this process defining two chains of locus ideals $\{\p_{2k} \}_{k \in \N}$ and $\{\p_{2k+1} \}_{k \in \N}$ where $\p_{2k}=(x,\frac{y}{x^{k}})$ and $\p_{2k+1}=(\frac{y}{x^{k}}, \frac{zx^{s_k}}{y^k}  )$, assuming hence that $ \p_{2k}S=xS $ and $ \p_{2k+1}S=\frac{y}{x^k}S $ for all $k$.
 In this way we obtain the sequence of rings $\{(R_n, \m_n) \}_{n \in \N}$ where for $k \geq 0$ the maximal ideals are $$\m_{2k}= (x, \frac{y}{x^{k}},  \frac{zx^{s_k}}{y^{k}}) $$ and $$\m_{2k+1}= (x, \frac{y}{x^{k+1}},  \frac{zx^{s_k}}{y^{k}}). $$  
Let $S = \bigcup_{n \in \N}R_n$ be the monoidal Shannon extension of $R$ obtained as directed union of this sequence of rings. The maximal ideal $\m_S$ of $S$ is the principal ideal $xS$. 
Moreover we can note that for every $n \in \N$ the elements $ \frac{y}{x^n} \in S $ and $ \frac{z}{y^n}=  \frac{zx^{s_{n+1}}}{y^{n+1}} \frac{y}{x^{s_{n+1}}} \in S$. Thus we have $z \in Q:=\bigcap_{n \geq 0}y^{n}$ and $y \in P:=\bigcap_{n \geq 0}x^{n}$. 

We claim that $S$ is a discrete valuation ring of rank 3. Indeed, since $S$ is a local domain with principal maximal ideal, the ideal $P$ is prime and $S$ is a valuation ring if and only if his localization $S_P$ is a valuation ring. The ideal $P$ is generated by the family of elements $\{ \frac{y}{x^n}\}_{n \in \N}$ and hence, since $x$ is a unit in $S_P$ we have $PS_P=yS_P$. By the same fact $S_P$ is a valuation ring if and only if $(S_P)_{QS_P}$ is a valuation ring, but $ (S_P)_{QS_P}= R_{zR} $ is a DVR and hence $S$ is a valuation ring.

The union of the ideals of the first chain $\{\p_{2k} \}_{k \in \N}$ is the maximal ideal $\m_S$ and hence this chain is not minimal. Instead the chain $\{\p_{2k+1} \}_{k \in \N}$ is minimal and has as union the ideal $P$.

In this case $S$ admits a Noetherian hull that is the overring $S_Q= S[\frac{1}{x\frac{y}{x}}]= S[\frac{1}{y}]$. 
\end{example}

\medskip

\section{GCD property for monoidal Shannon extensions whose locus ideals form finitely many chains} \label{ultima}

We can determine whether a monoidal Shannon extension is a GCD domain adding the assumption that the locus ideals of this extension form only finitely many chains. First we need a description of some prime elements of a monoidal Shannon extension.

\begin{lemma} \label{primeelements}   
Let $S$ be a monoidal Shannon extension of a regular local ring $R$. Let $y \in R_m$ be a prime element and assume $y \not \in \p_n$ for every $n \geq m$. It follows that $y$ is a prime element of $S$ and the prime ideal $yS$ has height one.
\end{lemma}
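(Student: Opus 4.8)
The plan is to reduce the statement to the corresponding fact inside each Noetherian ring $R_n$ and then pass to the directed union. Since $y\in R_m$ is prime and each $R_n$ is a regular local ring, hence a UFD, the heart of the matter is to prove by induction on $n\ge m$ that $y$ stays a prime element of $R_n$. Granting this, primality in $S$ is immediate from directedness: if $ab=yc$ with $a,b,c\in S$, I would choose $n\ge m$ with $a,b,c\in R_n$, so that $ab\in yR_n$, and primality of $y$ in $R_n$ forces $a\in yR_n\subseteq yS$ or $b\in yR_n\subseteq yS$; moreover $yS$ is proper because $y\in\m_n\subseteq\m_S$ and nonzero because $y\ne 0$.

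For the inductive step I would pass from $R_n$ to $R_{n+1}=R_n[\frac{\p_n}{x_n}]_{\m_{n+1}}$ by analysing the prime factorization of $y$ in the UFD $R_{n+1}$ after inverting $x_n$. Two divisibility facts are the crux. First, $y\nmid x_n$ in $R_n$: if $x_n=yw$, then reducing modulo $\p_n$ and using that $R_n/\p_n$ is a domain with $y\notin\p_n$ gives $w\in\p_n$, whence $w/x_n\in R_{n+1}$, and the relation $x_n=y\,x_n\,(w/x_n)$ in the domain $R_{n+1}$ forces $y$ to be a unit there, contradicting $y\in\m_n\subseteq\m_{n+1}$. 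Consequently $y$ stays prime in $R_{n+1}[\frac{1}{x_n}]=R_n[\frac{1}{x_n}]$ (Proposition~\ref{blowup}(4)). Second, $x_n\nmid y$ in $R_{n+1}$: by Proposition~\ref{blowup}(3) we have $x_nR_{n+1}\cap R_n=\p_n$, and $y\notin\p_n$ by hypothesis, so $y\notin x_nR_{n+1}$. Now writing the factorization of $y$ in $R_{n+1}$ as a unit times a product of primes, every prime factor not associate to $x_n$ survives in $R_n[\frac{1}{x_n}]$, where $y$ is prime; hence there is exactly one such factor, appearing to the first power, while the power of $x_n$ vanishes by the second fact. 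Thus $y$ is associate to a single prime of $R_{n+1}$, completing the induction. I expect this step — pinning down the factorization so as to exclude both an $x_n$-factor and any spurious extra factor — to be the main obstacle.

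Finally, to obtain $\hgt yS=1$ I would localize. The analysis above shows $yR_{n+1}$ is a height-one prime not containing $x_nR_{n+1}$, so Proposition~\ref{blowup}(5) applies and yields $(R_{n+1})_{yR_{n+1}}=(R_n)_{yR_{n+1}\cap R_n}=(R_n)_{yR_n}$, the last equality because $yR_{n+1}\cap R_n$ is a height-one prime containing $y$, hence equal to $yR_n$. By induction $(R_n)_{yR_n}=(R_m)_{yR_m}$ for all $n\ge m$, which is a DVR. Since localization commutes with the directed union and $yS\cap R_n=yR_n$ (obtained by iterating the contraction identity $yR_{k+1}\cap R_k=yR_k$), I conclude $S_{yS}=\bigcup_{n\ge m}(R_n)_{yR_n}=(R_m)_{yR_m}$. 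As this is a one-dimensional regular local ring, $\hgt yS=\dim S_{yS}=1$.
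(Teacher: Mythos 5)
Your proof is correct and shares the paper's overall skeleton --- induct up the tower to show $y$ remains prime in each $R_n$, get primality in $S$ from directedness, and compute the height by identifying $S_{yS}$ with a Noetherian localization --- but both key steps are carried out by genuinely different means. For the inductive step, the paper takes a prime factor $f$ of $y$ in $R_{n+1}$, observes that $f$ is not associate to $x_n$ (since $y \notin \p_n R_{n+1}$), uses the biregularity of Proposition~\ref{blowup}(5) at $fR_{n+1}$ to get $fR_{n+1} \cap R_n = yR_n$, and finishes with an explicit fraction computation ($u x_n^k f = yc$, with $x_n$ prime and $x_n \nmid y$) showing $f \in yR_{n+1}$, hence $y$ associate to $f$. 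You instead compare unique factorizations across the identity $R_{n+1}[\frac{1}{x_n}] = R_n[\frac{1}{x_n}]$ of Proposition~\ref{blowup}(4), with your two divisibility facts ($y \nmid x_n$ in $R_n$, proved by your reduction modulo $\p_n$; and $x_n \nmid y$ in $R_{n+1}$, from Proposition~\ref{blowup}(3)) pinning the factorization of $y$ in $R_{n+1}$ down to a single prime --- both facts check out, and the argument is sound. For the height, the paper simply invokes Lemma~\ref{biregularity} at $P = yS$, whereas you reconstruct the relevant case of that lemma by hand, establishing the contraction identities $yR_{k+1} \cap R_k = yR_k$ and $yS \cap R_n = yR_n$ and concluding $S_{yS} = (R_m)_{yR_m}$, a DVR. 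What each approach buys: the paper's version is shorter because it reuses machinery already set up; yours is more elementary and self-contained, and it makes explicit a point the paper leaves implicit --- to apply Lemma~\ref{biregularity} at $yS$ one must know that no locus ideal $\p_n$ with $n \geq m$ is contained in $yS$ (equivalently, $x_n \notin yS$), and that is exactly the non-divisibility statement your contraction identities supply.
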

 
 \begin{proof}
 We need to prove that $y$ is a prime element in $R_n$ for every $n \geq m$. This implies that $y$ is a prime element of $S$. 
Let $ R_{m+1}= R_m[\frac{\p_m}{t}]_{\m_{m+1}} $. Since $y \not \in \p_m$, it follows $y \not \in tR_{m+1}=\p_mR_{m+1}$. Consider a prime element $f \in R_{m+1}$ such that $y \in fR_{m+1}$. By biregularity (Proposition \ref{blowup}(5)), $fR_{m+1} \cap R_m$ is an height one prime of $R_m$, but, since $y$ is prime in $R_m$, we have $fR_{m+1} \cap R_m= yR_m$. 

Moreover there exists a unit $u \in R_{m+1}$ and an integer $k \geq 0$ such that $$ut^kf \in fR_{m+1} \cap R_m= yR_m$$ and hence $ ut^kf=yc $ for some $c \in R_m$. It follows $f= \frac{yc}{ut^k} $, but $t$ is prime in $R_{m+1}$ and it does not divide $y$. Therefore $ \frac{c}{ut^k} \in R_{m+1} $, $ f \in yR_{m+1} $ and $y$ is prime in $R_{m+1}$. By induction it follows that $y$ is prime in $R_n$ for $n \geq m$ and applying Lemma \ref{biregularity} at $P=yS$, it follows that $yS$ has height one.
 \end{proof}

\begin{definition} \label{finitechains}
We say that a monoidal Shannon extension has finitely many chains of locus ideals if there are only finitely many chain-prime ideals $Q_1, \ldots, Q_c$ and for $n \gg 0$ each locus ideal $\p_n$ is contained in $Q_i$ for some $i=1, \ldots, c.$
\end{definition}

\begin{theorem} 
\label{monoidal}
Let $S = \bigcup_{n=1}^{\infty} R_n$ be a monoidal Shannon extension having finitely many chains with correspondent chain-prime ideals $Q_1, \ldots, Q_c$. Then the following conditions are equivalent:
\begin{enumerate}
\item $S$ is a GCD domain.
\item $S_{Q_i}$ is a valuation domain for every chain-prime ideal $Q_i$.
\end{enumerate}
\end{theorem}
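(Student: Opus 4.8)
The plan is to prove the two implications separately. For $(1) \Longrightarrow (2)$, I would argue by contradiction: if some $S_{Q_i}$ is not a valuation domain, I will produce two elements whose intersection of principal ideals in $S$ is finitely generated but not principal, contradicting the GCD hypothesis. The natural reduction is that $S_{Q_i}$ is itself a localization of $S$ at a chain-prime ideal, and a localization of a GCD domain is again a GCD domain; so if $S$ is a GCD domain then each $S_{Q_i}$ is a GCD domain. The key point, which I expect to invoke from the earlier material, is that a chain-prime ideal $Q_i = \bigcup_i \p_{n_i}$ has $S_{Q_i}$ expressible as a directed union of Noetherian GCD domains whose maximal ideal is a directed union of principal ideals (by Proposition \ref{singlechain}(2), $Q_iS_{Q_i}$ is a directed union of principal ideals). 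Then Theorem \ref{directSBID} forces $S_{Q_i}$ to be an SBID, and the remark following Definition \ref{d2} says an SBID that is a GCD domain must be a valuation domain. This chains together to give $(2)$.

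For the harder direction $(2) \Longrightarrow (1)$, assuming every $S_{Q_i}$ is a valuation domain, I must show $aS \cap bS$ is principal for arbitrary $a, b \in S$. By Theorem \ref{uniongcd}, $S$ is already a BID, so $aS \cap bS$ is either principal or non–finitely generated; the task reduces to ruling out the non–finitely-generated alternative, i.e. to showing $aS \cap bS$ is always finitely generated. By Corollary \ref{primitiveprop} (using that each $R_n$ is a Noetherian GCD domain), principality of $aS \cap bS$ is equivalent to the ideal $J_n = (a/d_n, b/d_n)S$ being primitive for some $n$, where $d_n = \gcd_n(a,b)$. The strategy is to analyze where $J_n$ can fail to be primitive: a failure means $a/d_n$ and $b/d_n$ share a common non-unit factor that only appears later in the chain, which forces $d_n$ to keep growing, and the common factors must accumulate into one of the finitely many chain-prime ideals $Q_i$.

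The core of the argument, and the main obstacle, is the following. After dividing out $d_n = \gcd_n(a,b)$, the only way the gcd can keep strictly increasing is that $a/d_n$ and $b/d_n$ acquire a common factor lying in some chain-prime ideal $Q_i$; here I would use Lemma \ref{primeelements} to control the height-one prime elements of $S$ that are not swallowed by any chain, showing such primes stabilize the gcd. Localizing at $Q_i$, the hypothesis that $S_{Q_i}$ is a valuation domain means any two elements are comparable under divisibility there, so $\gcd_n$ computed in $S_{Q_i}$ stabilizes after finitely many steps. The delicate part is to globalize this: I must show that stabilization of the gcd at each of the finitely many chain-primes $Q_1, \ldots, Q_c$, together with the primitivity criterion for the remaining (biregular, height-one) part coming from Lemma \ref{biregularity} and Lemma \ref{primeelements}, forces $\gcd_n(a,b)$ to stabilize globally, hence $aS \cap bS$ to be principal by Corollary \ref{primitiveprop}. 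The finiteness of the number of chains is exactly what makes this globalization work: the potential obstructions to primitivity are confined to finitely many valuation localizations, each of which resolves them, and everything else is governed by genuine prime elements of $S$.
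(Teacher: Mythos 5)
Your implication $(1)\Rightarrow(2)$ is complete and coincides with the paper's: localization preserves the GCD property, $S_{Q_i}=\bigcup_n (R_n)_{Q_i\cap R_n}$ is a local directed union of Noetherian UFDs whose maximal ideal is a directed union of principal ideals (Proposition \ref{singlechain}), so Theorem \ref{directSBID} makes it an SBID, and an SBID which is a GCD domain is a valuation domain. The skeleton you sketch for $(2)\Rightarrow(1)$ --- the primitivity criterion of Corollary \ref{primitiveprop}, control at the chain-primes via the valuation hypothesis, control away from them via Lemma \ref{primeelements} and biregularity --- is also the paper's. But what you yourself call ``the main obstacle'' and ``the delicate part'' is exactly the content of the paper's proof, and your proposal never carries it out, so there is a genuine gap rather than a proof.

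Concretely, two arguments are missing. (a) At a chain-prime $Q$ containing $(a,b)S$, the statement you need is not that gcd's ``stabilize in $S_Q$'' (in a valuation domain any two elements trivially have a gcd); it is that $J_n=(\frac{a}{d_n},\frac{b}{d_n})S\nsubseteq Q$ for $n\gg 0$. The paper gets this by assuming WLOG $\frac{a}{b}\in S_Q$, hence $\frac{a}{b}\in (R_N)_{Q\cap R_N}$ for some $N$, and then using coprimality of $\frac{a}{d_n}$ and $\frac{b}{d_n}$ in the UFD $R_n$ to conclude $\frac{b}{d_n}\notin Q$ for all $n\geq N$. (b) After replacing $a,b$ by $\frac{a}{d_n},\frac{b}{d_n}$ (which leaves the ideals $J_m$, $m\geq n$, unchanged), so that $(a,b)S\nsubseteq Q_i$ for every $i$, you must actually prove that $J_n$ is primitive; your text only asserts that the remaining obstructions are ``governed by genuine prime elements of $S$.'' The paper's argument is a contradiction: if $J_n\subseteq qS$ with $q$ a non-unit, one may take $q$ prime in some $R_m$ with $q\notin Q_i$ for all $i$; by the finitely-many-chains hypothesis $q$ then avoids all locus ideals of large index, so $qS$ is a height-one prime of $S$ by Lemma \ref{primeelements}; and biregularity (Proposition \ref{blowup}(5)) gives $(R_m)_{qR_m}=(R_n)_{qR_m\cap R_n}$, so $qR_m\cap R_n$ is a height-one prime of $R_n$ containing $(a,b)R_n$ yet distinct from the finitely many common prime factors $p_1,\ldots,p_t$ of $a,b$ in $R_n$ (each of which satisfies $J_n\nsubseteq p_jS$) --- impossible in the UFD $R_n$. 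This contraction argument is where the finiteness of the set of chains and Lemma \ref{primeelements} do their work; without it, the globalization you describe is only a statement of what must be shown, not a proof.
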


\begin{proof}
Assume $S$ is a GCD domain. Hence $S_{Q_i}$ is a GCD domain for every $i$ since a localization of a GCD domain is again a GCD domain. Moreover, $S_{Q_i}$ is a local directed union of UFDs whose maximal ideal is a directed union of principal ideals. It follows by Theorem \ref{directSBID} that $S_{Q_i}$ is a SBID and therefore it is a valuation domain (for a SBID, GCD domain is equivalent to valuation domain).

Conversely, assume $S_{Q_i}$ to be a valuation domain for every chain-prime ideal $Q_i$ and also assume $Q_i \subsetneq \m_S$ otherwise the result is trivial. 
Let $a,b \in \m_S$. Consider the ideal $$ I= aS \cap bS = \bigcup_{n \geq N}( aR_n \cap bR_n) = \bigcup_{n \geq N} \left( \dfrac{ab}{d_n}R_n \right) $$ where $d_n$ is the great common divisor of $a$ and $b$ in $R_n$ and $N$ is large enough such that $a,b \in R_N.$ By Corollary \ref{primitiveprop}, $I$ is principal if and only if the ideal $$J_n= \left( \dfrac{a}{d_n}, \dfrac{b}{d_n} \right)S $$ is primitive for some $n$. Observe that, for every $n$, $J_n \subseteq J_{n+1}$.

Let $Q$ be a chain-prime ideal of $S$ and assume $(a,b)S \subseteq Q$. Since $Q$ is a directed union of principal ideals of $S$, we have for some $n$, $(a,b)R_n \subseteq qR_n \subseteq Q \cap R_n$. Hence for any large enough $n$, we can write $d_n=q_n s_n$ with $q_n \in Q$ and $s_n \in S \setminus Q$ and therefore $a= q_n s_n \frac{a}{d_n}$ and $b= q_n s_n \frac{b}{d_n}$.

Since $S_{Q}$ is a valuation ring, we may assume $\frac{a}{b} \in S_{Q} = \bigcup_{n=1}^{\infty} (R_n)_{(Q \cap R_n)}$. It follows that for some $N$, we also have $\frac{a}{b} \in (R_N)_{(Q \cap R_N)}$ and hence $ \frac{b}{d_n} \not \in Q $ for every $n \geq N$, otherwise we would have some prime element of $ (R_N)_{(Q \cap R_N)} $ dividing $b$ but not dividing $a$. This implies that $J_n \nsubseteq Q$ for every $n \geq N$. Since there are only finitely many chain-prime ideals, we can find $N$ sufficiently large such that $J_n \nsubseteq Q_i$ for every $i$.

For this reason, eventually replacing $a$ and $b$ with $\frac{a}{d_n}$ and $\frac{b}{d_n}$ for some $n \gg 0$, we may assume $(a,b)S \nsubseteq Q_i$ for every $i$. In this case, if $(a,b) \subseteq yS \subseteq \m_S$ is not primitive, then $(a,b)R_n \subseteq yR_n$ for some $n$. Since $R_n$ is a Noetherian UFD, $(a,b)R_n$ is contained in some principal prime ideals $p_1R_n, \ldots, p_tR_n$ and it is not contained in any other principal prime of $R_n$. The assumption of having $(a,b)S \nsubseteq Q_i$ implies that $p_1R_n, \ldots, p_tR_n$ are not contained in any locus ideal $\p_h$ of $S$. Hence $p_1S, \ldots, p_tS$ are height one prime ideals of $S$ by Lemma \ref{primeelements}. Now $$(a,b)R_n \subseteq J_n \nsubseteq p_jS$$ for every $j=1, \ldots, t$. 

If, by way of contradiction we assume $J_n$ not primitive, by the same argument used above we would have $J_n \subseteq qS$ where $qS$ is an height one principal prime ideal of $S$, $q$ is not contained in any of the locus ideals of $S$ and $q \neq p_j$ for every $j$. Furthermore, since $a,b \in qS$, we can find $m > n$ such that $(a,b)R_n \subseteq qR_m$ and, by Proposition \ref{blowup}(5), we have $$ (R_m)_{qR_m} = (R_n)_{(qR_m \cap R_n)}. $$ Hence, since $qR_m $ is an height one prime of $R_m$, its contraction $qR_m \cap R_n$ is an height one prime ideal of $R_n$ containing $(a,b)R_n$ and different from $p_1R_n, \ldots, p_tR_n$. This is a contradiction and thus $J_n$ has to be primitive and this fact implies that $I$ is principal.
\end{proof}

\medskip

\begin{corollary} 
The monoidal Shannon extension $S$ in Construction \ref{construction} is a GCD domain.
\end{corollary}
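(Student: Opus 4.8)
The plan is to deduce this directly from Theorem~\ref{monoidal}, whose substantive work has already been carried out; the corollary only requires checking that $S$ meets the two hypotheses of that theorem, namely that $S$ has finitely many chains of locus ideals (Definition~\ref{finitechains}) and that $S_{Q_i}$ is a valuation domain for each chain-prime ideal $Q_i$. So the entire task reduces to identifying the chain-prime ideals of the sequence in Construction~\ref{construction} and verifying the valuation property, both of which can be read off from Theorem~\ref{specialcase}.

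First I would pin down the chain-prime ideals. As observed at the opening of Section~4, the even-indexed locus ideals satisfy $\p_0 \subseteq \p_2 \subseteq \p_4 \subseteq \cdots$ with $\bigcup_k \p_{2k} = xS$, while the odd-indexed ones satisfy $\p_1 \subseteq \p_3 \subseteq \cdots$ with $\bigcup_k \p_{2k+1} = zS$. To see these are the only two chain-prime ideals, I would combine Proposition~\ref{blowup}(2) with the computation $\p_{2k}S = xS$ and $\p_{2k+1}S = zS$: since $xS$ and $zS$ are distinct principal primes of $S$ by Theorem~\ref{specialcase}(2), no even locus ideal can be contained in an odd one nor conversely, because $x \in \p_{2k+1} \subseteq zS$ or $z \in \p_{2k} \subseteq xS$ would force $x \in zS$ or $z \in xS$. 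Hence within any infinite chain of locus ideals all members share the same parity, so the chain is an infinite subfamily of $\{\p_{2k}\}$ or of $\{\p_{2k+1}\}$; being indexed by an unbounded subset of $\N$, such a subfamily is cofinal and its union is exactly $xS$ or $zS$. This shows that $S$ has precisely two chain-prime ideals, $Q_1 = xS$ and $Q_2 = zS$, and that every $\p_n$ is contained in one of them, so $S$ has finitely many chains in the sense of Definition~\ref{finitechains}.

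With the chain-prime ideals identified, the valuation hypothesis is immediate: Theorem~\ref{specialcase}(4) asserts that $S_{xS}$ and $S_{zS}$ are rank~$2$ valuation domains. Thus condition~(2) of Theorem~\ref{monoidal} holds for both $Q_1$ and $Q_2$, and that theorem yields that $S$ is a GCD domain. The only genuine content beyond quoting earlier results is the bookkeeping that establishes the chain structure and rules out further chain-prime ideals; I expect this parity argument to be the single step requiring care, while the valuation property and the final GCD conclusion follow formally from Theorem~\ref{specialcase} and Theorem~\ref{monoidal}.
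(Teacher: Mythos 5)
Your proposal is correct and follows the same route as the paper: verify the hypotheses of Theorem~\ref{monoidal} by identifying $xS$ and $zS$ as the only chain-prime ideals (with every locus ideal contained in one of them) and invoking Theorem~\ref{specialcase} for the fact that $S_{xS}$ and $S_{zS}$ are valuation domains. The paper simply states the chain structure as an observation, whereas you supply the parity/cofinality bookkeeping explicitly; this is a legitimate elaboration, not a different argument.
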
 

\begin{proof}
Observe that $S$ has two chain-prime ideals that are $xS$ and $zS$ and every locus ideal in contained in one of them. By Theorem \ref{specialcase}, $S_{xS}$ and $ S_{zS} $ are valuation domains. We conclude using Theorem \ref{monoidal}.
\end{proof}
 
 \section*{Acknowledgements}
The author wishes to thank Professors William J. Heinzer, K. Alan Loper, Bruce Olberding and Matthew Toeniskoetter for the very interesting discussions about the contents of this article.

\addcontentsline{toc}{chapter}{Bibliography}

 \end{document}